\newcommand{\al}{\alpha}
\newcommand{\fr}{\mathcal{F}}
\newcommand{\D}{\Delta}
\newcommand{\ity}{\infty}
\newcommand{\C}{\mathbb{C}}
\numberwithin{equation}{section}
\newtheorem{theorem}{Theorem}[section]
\newtheorem{lemma}[theorem]{Lemma}
\newtheorem{corollary}[theorem]{Corollary}
\theoremstyle{remark}
\begin{document}

\title[A normality Criterion for a Family of Meromorphic Functions ]{A normality Criterion for a Family of Meromorphic Functions}

\thanks{The research work of the first author is supported by research fellowship from UGC India.}

\author[G. Datt]{Gopal Datt}
\address{Department of Mathematics, University of Delhi,
Delhi--110 007, India} \email{ggopal.datt@gmail.com }

\author[S. Kumar]{Sanjay Kumar}

\address{Department of Mathematics, Deen Dayal Upadhyaya College, University of Delhi,
Delhi--110 015, India }
\email{sanjpant@gmail.com}

\begin{abstract}
 Schwick, in \cite{Sch1}, states that let $\fr$ be a family of meromorphic functions on a domain $D$ and if for each   $f\in\fr$, $(f^n)^{(k)}\neq 1$, for $z\in D$, where $n, k$ are positive integers such that $n\geq k+3$,  then 
 $\fr$ is a normal family in $D$. In this paper, we investigate the opposite view that if for each $f\in\fr$, 
 $(f^n)^{(k)}(z)-\psi(z)$ has zeros in $D$, where $\psi(z)$ is a holomorphic function in $D$, 
 then what can be said about the normality of the family $\fr$?
\end{abstract}

\keywords{Meromorphic functions, Holomorphic functions,  Shared values, Normal families.}

\subjclass[2010]{30D45}

 \maketitle

\section{Introduction and main results}
The notion of normal families was introduced by Paul Montel in 1907. Let us begin by recalling the definition: 
A family   of meromorphic functions defined on a domain $D\subset \C$ is said  to be normal in the domain, if 
every sequence in the family  has a subsequence which converges spherically uniformly on compact subsets of  
$D$ to a meromorphic function or to $   \infty$. \\

One important aspect of the theory of complex analytic functions is to find normality criteria for families of 
meromorphic functions. Montel obtained a normality criterion, now known as the fundamental normality test, 
which  says that  {\it a family of meromorphic functions in a domain  is normal if it omits three distinct complex 
numbers.} This  result has undergone  various extensions. In 1975,  Lawrence Zalcman \cite{Zalc 1} proved a 
remarkable  result, now known as Zalcman's Lemma, for  families of meromorphic functions which are not normal 
in a domain. Roughly speaking, it says that {\it a non-normal family can be rescaled at small scale to obtain a 
non-constant meromorphic function in the limit. } This result of Zalcman gave birth to many new normality 
criteria. These normality criteria have been used extensively in complex dynamics for studying the  Julia-Fatou 
dichotomy.\\

Wilhelm Schwick \cite{Sch1}  proved a normality criterion which states that: {\it{Let $n, k$ be positive integers 
such that  $n\geq k+3$, let $\mathcal F$ be a family of functions meromorphic  in $ D$.  If each $f\in \mathcal F$ 
satisfies $(f^n)^{(k)}(z)\neq 1$ for $z\in  D$, then $\mathcal F$ is a normal family.}} This result holds good for 
holomorphic functions in case $n\geq k+1$. Recently  Gerd Dethloff et al. \cite{dethloff} came up with new 
normality criteria, which improved the result given by Schwick \cite{Sch1}.

\begin{theorem}\label{thm1}
Let  $a_1, a_2,\ldots, a_q,$ be $q$ distinct non-zero complex values and $l_1, l_2,\ldots, l_q$ be $q$ positive 
integers $($or $+\ity$$)$, where $q\geq1$.  Let $n$ be a non-negative integer, and  
$n_1,\ldots, n_k, t_1,\ldots, t_k$  positive integers $(k\geq1).$ Let $\fr$ be a family of 
meromorphic functions in a  domain $D$ such that for every 
$f \in \fr$,  all zeros of $f^n(f^{n_1})^{(t_1)}\ldots(f^{n_k})^{(t_k)}-a_i$ are of multiplicity 
at least $l_i$, for $i=1, 2, \ldots, q$. Assume that
\begin{enumerate}
\item[(a)] $n_j\geq t_j$ for all $1\leq j\leq k,$ and $l_i\geq2$ for all $1\leq i\leq q.$
\item[(b)] $\displaystyle{\sum_{i=1}^{q}\frac{1}{l_i}<\frac{qn-2+
\sum_{j=1}^{k}q(n_j-t_j)}{n+\sum_{j=1}^{k}(n_j+t_j)}}$.
\end{enumerate}
Then $\fr$ is normal in $D$.
\end{theorem}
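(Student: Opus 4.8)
The plan is to argue by contradiction, combining Pang's rescaling lemma with the Second Main Theorem. Write $F_f := f^n (f^{n_1})^{(t_1)}\cdots (f^{n_k})^{(t_k)}$, and set $d := n+\sum_{j=1}^k n_j$, $w:=\sum_{j=1}^k t_j$; by hypothesis (a) we have $n_j\ge t_j$, so $d-w = n+\sum_{j=1}^k (n_j-t_j)\ge 0$, and (b) reads $(d+w)\sum_{i=1}^q 1/l_i < q(d-w)-2$, which in particular forces $w<d$. Normality being a local property, suppose $\fr$ is not normal at some $z_0\in D$. Applying the Zalcman--Pang rescaling lemma with the exponent $\alpha := w/d\in(0,1)$ (the strict bound $w<d$ being supplied by (b)), I obtain a sequence $f_m\in\fr$, points $z_m\to z_0$ and scales $\rho_m\to 0^+$ such that
\[
g_m(\zeta):=\rho_m^{-\alpha} f_m(z_m+\rho_m\zeta)\longrightarrow g(\zeta)
\]
spherically locally uniformly on $\C$, where $g$ is a non-constant meromorphic function with $g^{\#}(\zeta)\le g^{\#}(0)=1$. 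A direct chain-rule computation gives
\[
F_{f_m}(z_m+\rho_m\zeta)=\rho_m^{\,\alpha d-w}\; g_m^{\,n}\prod_{j=1}^k\big(g_m^{\,n_j}\big)^{(t_j)},
\]
and the choice $\alpha=w/d$ makes the exponent vanish, so that $F_{f_m}(z_m+\rho_m\cdot)\to G:=g^{\,n}\prod_{j=1}^k (g^{\,n_j})^{(t_j)}$ locally uniformly.

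Next I would transfer the multiplicity hypothesis to the limit. Since each $F_{f_m}-a_i$ has only zeros of multiplicity $\ge l_i$ and $F_{f_m}(z_m+\rho_m\cdot)-a_i\to G-a_i$, Hurwitz's theorem shows that every zero of $G-a_i$ has multiplicity $\ge l_i$ for $i=1,\dots,q$, provided $G\not\equiv a_i$. The degenerate possibility that $G$ is a constant must be excluded separately: if $G\equiv c$ then $g$ can have neither zeros nor poles (a zero of $g$ of order $\mu$ forces a zero of $G$ of order $d\mu-w$ and a pole forces a pole of order $d\mu+w$), so $g=e^{h}$ with $h$ non-constant entire, and substituting this into $g^{\,n}\prod(g^{\,n_j})^{(t_j)}\equiv c$ contradicts the bounded spherical derivative; the case $c=0$ is handled likewise using $n_j\ge t_j$. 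Thus $G$ is a non-constant meromorphic function on $\C$ all of whose $a_i$-points have multiplicity at least $l_i$.

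It remains to produce a contradiction from value-distribution theory. Using the factorisation $G=g^{\,d}\prod_{j=1}^k (g^{\,n_j})^{(t_j)}/g^{\,n_j}$ together with the lemma on the logarithmic derivative, one records the exact pole count $N(r,G)=d\,N(r,g)+w\,\overline N(r,g)$, the identity $\overline N(r,G)=\overline N(r,g)$, and the proximity bound $m(r,G)\le d\,m(r,g)+S(r,g)$; feeding these into the First Main Theorem yields matching estimates $(d-w)T(r,g)-S(r,g)\le T(r,G)\le (d+w)T(r,g)+S(r,g)$, in which the zero-behaviour of $g$ contributes the factor $d-w$ and the pole-behaviour the factor $d+w$. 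I then apply the Second Main Theorem to $G$ for the $q+1$ values $a_1,\dots,a_q,\ity$, and replace each $\overline N(r,1/(G-a_i))$ by $\tfrac1{l_i}N(r,1/(G-a_i))\le \tfrac1{l_i}T(r,G)$ using the multiplicity bound. Collecting all terms and inserting the two estimates above transforms the Second Main Theorem inequality into
\[
\Big(q(d-w)-2-(d+w)\sum_{i=1}^q\tfrac1{l_i}\Big)\,T(r,g)\le S(r,g),
\]
whose left-hand coefficient is strictly positive by exactly hypothesis (b). Since $g$ is non-constant this is impossible, and the contradiction proves that $\fr$ is normal.

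The main obstacle is this last step: carrying out the Nevanlinna bookkeeping with the sharp constants. One must track precisely where the two different ``degrees'' $d-w$ and $d+w$ enter---the former from zeros of $g$, where the derivative factors lower the order (which is why hypothesis (a), $n_j\ge t_j$, is needed to keep these orders non-negative and the counting exact), and the latter from poles of $g$---and combine the resulting counting functions so that the coefficient of $T(r,g)$ is exactly $q(d-w)-2-(d+w)\sum 1/l_i$ rather than a lossy substitute. Equally delicate is controlling the degenerate configurations (an entire limit $g$ with few poles, and the constant-$G$ alternative above), for which the crude bounds $\overline N(r,g),\overline N(r,1/g)\le T(r,g)$ are too weak and must be replaced by the exact relations recorded above.
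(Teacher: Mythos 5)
First, a point of reference: the paper does not prove this statement at all --- it is quoted as background from Dethloff--Tan--Thin \cite{dethloff}. So there is no in-paper proof to compare against, and your proposal has to stand on its own merits.

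Much of your setup is sound. The rescaling exponent $\alpha=w/d\in(0,1)$ is legitimate (hypothesis (b) does force $w<d$, and Pang's lemma applies in this range), the identity $F_{f_m}(z_m+\rho_m\zeta)=\rho_m^{\alpha d-w}g_m^n\prod_j(g_m^{n_j})^{(t_j)}$ is correct, the Hurwitz transfer and the exclusion of constant limits $G$ are fine, and in fact all four of your recorded Nevanlinna estimates are true --- including the lower bound $T(r,G)\geq (d-w)T(r,g)-S(r,g)$, which one can prove from $m(r,1/G)\geq d\,m(r,1/g)-S(r,g)$ (logarithmic derivative lemma applied to $1/g^d=(1/G)\prod_j (g^{n_j})^{(t_j)}/g^{n_j}$) together with the exact zero count $N(r,1/G)\geq d\,N(r,1/g)-w\overline{N}(r,1/g)$, which is where hypothesis (a) enters, exactly as you say.

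The genuine gap is the final assembly, which is the heart of the theorem and which does not follow from the steps you describe. The Second Main Theorem over the $q+1$ targets $a_1,\dots,a_q,\infty$ reads $(q-1)T(r,G)\leq \overline{N}(r,G)+\sum_i\overline{N}\bigl(r,1/(G-a_i)\bigr)+S(r,G)$. After substituting $\overline{N}\bigl(r,1/(G-a_i)\bigr)\leq \tfrac{1}{l_i}T(r,G)$, $\overline{N}(r,G)=\overline{N}(r,g)\leq T(r,g)$, and your two-sided bound on $T(r,G)$, the strongest conclusion available is $\bigl(q-1-\sum_i 1/l_i\bigr)(d-w)\,T(r,g)\leq T(r,g)+S(r,g)$, i.e.\ you need $\bigl(q-1-\sum_i 1/l_i\bigr)(d-w)>1$ --- which is not hypothesis (b). For $q=1$ (the Schwick/Hayman case, e.g.\ $F=f^nf'$, $a_1=1$, $l_1=\infty$, $n\geq 3$, where (b) holds) the coefficient $q-1-\sum 1/l_i\leq 0$ and the argument is vacuous; the same happens for $q=2$, $l_1=l_2=2$; and whenever $d-w\geq 2$ your needed condition is strictly stronger than (b). The sharp coefficient $q(d-w)-2-(d+w)\sum_i 1/l_i$ that you assert cannot come out of SMT over these $q+1$ targets, because it requires each value $a_i$ to contribute on the scale of $(d-w)T(r,g)$; to achieve that one must include $0$ (as well as $\infty$) among the targets, and then one is forced to estimate $\overline{N}(r,1/G)$ --- which contains the zeros of the factors $(g^{n_j})^{(t_j)}$ at points where $g\neq 0$. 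Controlling those ``extra'' zeros of derivatives is precisely the hard part of this circle of results (it is why Hayman/Milloux-type inequalities carrying a subtracted term such as $-N\bigl(r,1/f^{(k+1)}\bigr)$, like Lemma \ref{milloux} of this paper, exist), and your proposal never addresses them: the sentence ``collecting all terms \dots transforms the Second Main Theorem inequality into \dots'' is exactly the unproved step.
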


 For the case of holomorphic functions they proved the following strengthened version:
\begin{theorem}\label{thm1}
Let  $a_1, a_2,\ldots, a_q,$ be $q$ distinct non-zero complex values and $l_1, l_2,\ldots, l_q$ be $q$ positive 
integers $($or $+\ity$$)$, where $q\geq1$.  Let $n$ be a non-negative integer,  $n_1,\ldots, n_k, t_1,\ldots, t_k$ 
positive integers $(k\geq1).$ Let $\fr$ be a family of holomorphic functions in a  domain $D$ such that for every 
$f \in \fr$,  all zeros of $f^n(f^{n_1})^{(t_1)}\ldots(f^{n_k})^{(t_k)}-a_i$ are of multiplicity at least $l_i$, for 
$i=1, 2, \ldots, q$. Assume that
\begin{enumerate}
\item[(a)] ${n_j\geq t_j}$ for all $1\leq j\leq k,$ and $l_i\geq2$ for all $1\leq i\leq q.$
\item[(b)] $\displaystyle{\sum_{i=1}^{q}\frac{1}{l_i}<
\frac{qn-1+\sum_{j=1}^{k}q(n_j-t_j)}{n+\sum_{j=1}^{k}(n_j)}}$.
\end{enumerate}
Then $\fr$ is normal in $D$.
\end{theorem}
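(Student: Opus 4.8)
The plan is to argue by contradiction through the Zalcman--Pang rescaling lemma and then force a contradiction with Nevanlinna theory; the whole point is that the holomorphy of the family removes every contribution coming from poles, and this is exactly what sharpens the preceding meromorphic theorem to the present statement. Since normality is a local property, I would fix a point and work on a disc, write $M[f]=f^n(f^{n_1})^{(t_1)}\cdots(f^{n_k})^{(t_k)}$ for the differential monomial, and set $d=n+\sum_{j=1}^{k}n_j$ and $\tau=\sum_{j=1}^{k}t_j$. Assuming $\fr$ is not normal at the chosen point, I would apply the Zalcman--Pang lemma with the scaling exponent $\al=\tau/d$, which lies in $(0,1]$ because $n_j\ge t_j$ forces $\tau\le\sum_j n_j\le d$. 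This produces $f_j\in\fr$, points $z_j\to z_0$ and radii $\rho_j\to0^{+}$ such that $g_j(\zeta)=\rho_j^{-\al}f_j(z_j+\rho_j\zeta)$ converges locally uniformly to a non-constant function $g$ on $\C$. Because the $f_j$ are holomorphic and $\al>0$, the limit $g$ is a non-constant \emph{entire} function of finite order, and this entireness is the source of both improvements in condition (b).

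The key algebraic observation is that $\al$ has been chosen exactly so that the monomial is scale invariant: a direct chain-rule computation gives $M[f_j](z_j+\rho_j\zeta)=\rho_j^{\,\al d-\tau}M[g_j](\zeta)=M[g_j](\zeta)$, and since $g_j\to g$ together with all derivatives, $M[f_j](z_j+\rho_j\zeta)\to M[g](\zeta)$ locally uniformly. Now I would invoke the multiplicity hypothesis: every zero of $M[f_j]-a_i$ has multiplicity at least $l_i$, so the multiplicity form of Hurwitz's theorem transfers this to the limit, i.e.\ every zero of $M[g]-a_i$ has multiplicity at least $l_i$ (the degenerate case in which $M[g]$ reduces to a constant is treated directly, using that $g$ is non-constant entire). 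Thus the theorem is reduced to a Picard--type statement: no non-constant entire $g$ can have each $M[g]-a_i$ with all zeros of multiplicity $\ge l_i$ under the arithmetic constraint (b).

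To finish I would run a second-main-theorem estimate on the entire function $M=M[g]$, driven by two bounds. First, the logarithmic-derivative lemma gives the degree estimate $T(r,M)\le\big(n+\sum_j n_j\big)T(r,g)+S(r,g)$; crucially, since $g$ has no poles the derivative factors contribute no extra $t_j$ to the growth, which is exactly why the denominator here is $n+\sum_j n_j$ rather than $n+\sum_j(n_j+t_j)$ as in the meromorphic case. Second, the multiplicity condition yields $\overline N(r,1/(M-a_i))\le\tfrac1{l_i}N(r,1/(M-a_i))\le\tfrac1{l_i}T(r,M)+O(1)$. Applying the second main theorem to $M$ with the targets $0,a_1,\ldots,a_q$, I would bound $\overline N(r,1/M)$ from above by exploiting that at each zero of $g$ the monomial $M$ vanishes to order at least $d-\tau=n+\sum_j(n_j-t_j)$, so these zeros are cheap for $\overline N$ but expensive for $T(r,M)$; and since $M$ is entire, the target $\infty$ is a fully omitted (Picard) value contributing the full defect $1$, which replaces the $2$ of the meromorphic count by a $1$ and yields the numerator $qn-1+\sum_j q(n_j-t_j)$ rather than $qn-2+\cdots$. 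Combining these ingredients leads to an inequality of the shape
\begin{equation*}
\Big(qn-1+\sum_{j=1}^{k}q(n_j-t_j)\Big)T(r,g)\le\Big(n+\sum_{j=1}^{k}n_j\Big)\Big(\sum_{i=1}^{q}\frac1{l_i}\Big)T(r,g)+S(r,g),
\end{equation*}
and hypothesis (b) makes the coefficient on the left strictly larger than the one on the right, forcing $T(r,g)=S(r,g)$ and hence $g$ constant, a contradiction. I expect the main obstacle to be precisely this last bookkeeping: controlling $\overline N(r,1/M)$ together with the extra zeros of the derivative factors $(g^{n_j})^{(t_j)}$ away from the zeros of $g$, and tracking multiplicities carefully enough to extract the exact coefficients in (b) rather than a cruder bound such as $\sum_i 1/l_i<q-1$.
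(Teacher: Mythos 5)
First, a point of reference: the paper you are working from does not prove this theorem at all --- it is quoted as background from Dethloff--Tan--Thin \cite{dethloff} --- so your attempt can only be measured against the method of that cited source, whose skeleton your outline does reproduce correctly. The rescaling computation is right: with $d=n+\sum_j n_j$, $\tau=\sum_j t_j$ and $\alpha=\tau/d$ one indeed gets $M[f_j](z_j+\rho_j\zeta)=\rho_j^{\alpha d-\tau}M[g_j](\zeta)=M[g_j](\zeta)$; moreover $\alpha<1$ in every non-vacuous case, since $\alpha=1$ forces $n=0$ and $n_j=t_j$ for all $j$, which makes the numerator in (b) equal to $-1$ and the hypothesis unsatisfiable. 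The Hurwitz transfer and the shape of the final inequality are also correct.

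The genuine gap is exactly the step you defer to the end: the inequality
\begin{equation*}
\Bigl(qn-1+q\sum_{j=1}^{k}(n_j-t_j)\Bigr)T(r,g)\le\sum_{i=1}^{q}\overline{N}\Bigl(r,\frac{1}{M[g]-a_i}\Bigr)+S(r,g)
\end{equation*}
is never established, and the estimates you indicate do not yield it. If you apply the second main theorem to $M[g]$ with targets $0,a_1,\dots,a_q,\infty$ and then bound $\overline{N}(r,1/M[g])$ and $T(r,M[g])$ separately, the zeros of the factors $(g^{n_j})^{(t_j)}$ lying away from the zeros of $g$ cost you an extra $\tau$: the numerator degrades to $qn-1+q\sum_j(n_j-t_j)-\tau$, which is strictly too weak for hypothesis (b). The missing idea is a cancellation. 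Set $L_j=(g^{n_j})^{(t_j)}/g^{n_j}$, so that $M[g]=g^{d}\prod_j L_j$; the stray zeros of $M[g]$ are zeros of some $L_j$, and the first main theorem applied to $1/L_j$, together with the lemma on the logarithmic derivative and $N(r,L_j)\le t_j\overline{N}(r,1/g)$, gives the lower bound
\begin{equation*}
d\,T(r,g)\le T(r,M[g])+\tau\overline{N}(r,1/g)-\sum_{j=1}^{k}N\Bigl(r,\frac{1}{L_j}\Bigr)+S(r,g).
\end{equation*}
The point is that $\sum_j N(r,1/L_j)$ appears here with a minus sign, and appears with coefficient $1$ in the upper bound $\overline{N}(r,1/M[g])\le\overline{N}(r,1/g)+\sum_j N(r,1/L_j)$; multiplying the lower bound by $q\ge1$ and inserting the second main theorem makes these terms cancel, and only then does the exact constant $qn-1+q\sum_j(n_j-t_j)$ survive. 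A second, smaller gap: the degenerate Hurwitz case $M[g]\equiv a_i$ is not ``treated directly.'' You need (b) to force $n+\sum_j(n_j-t_j)\ge1$, which makes $g$ zero-free, hence $g=e^{h}$; then $M[g]=e^{dh}\prod_j P_j$ with each $P_j$ a differential polynomial in $h$ satisfying $T(r,P_j)=S(r,e^{h})$, and comparing characteristics in $e^{dh}\prod_j P_j\equiv a_i$ contradicts $h$ non-constant. Both gaps are fillable, so your plan is sound, but as written it does not reach the stated constants.
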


 The main aim of  this paper is to obtain a normality criterion with the condition $(f^n)^{(k)}(z)-\psi(z)$ has  
 zeros with multiplicities at least $m\geq 1$, where $\psi(z)(\not\equiv 0)$ is a holomorphic function.
\begin{theorem}\label{theorem 1}
Let $\fr$ be a family of meromorphic functions on a domain $D\subset \C$ and let $k, p, m$ and $n$ be positive 
integers satisfying 
\begin{enumerate}
\item[(a)] $\displaystyle{\frac{k+2(p+1)}{n}+\frac{p+1}{m} +\frac{k(p+1)}{mn}<1}$,
\item[(b)] $\displaystyle{\frac{p+1}{m}+2\left(\frac{k+p+1}{n}\right)<1}$.
\end{enumerate}
Let $\psi(z)(\not\equiv 0)$ be a holomorphic function in $D$, which has zeros of multiplicity at most $p$. 
Suppose that, for every function $f\in\fr$,
\begin{enumerate}
\item $(f^n)^{(k)}(z)-\psi(z)$ has zeros of multiplicity at least $m$,
\item $\psi(z)$ and $f(z)$ have no common zeros in $D$,
\item number of poles of $f$ $($if they exist$)$ are greater than or equal to the number of zeros of $f$.
\end{enumerate}
Then $\fr$ is normal in $D$.
\end{theorem}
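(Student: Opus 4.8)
The plan is to argue by contradiction through the standard rescaling machinery of Zalcman and Pang, and then to convert the hypotheses into a value-distribution statement on a limit function over $\C$ that is ruled out by Nevanlinna theory once conditions (a) and (b) are invoked. Since normality is a local property, I would fix a point $z_0 \in D$ and show that $\fr$ is normal at $z_0$; assume to the contrary that it is not. Because $k/n < 1$ (which is forced by (a)), the generalized Zalcman--Pang rescaling lemma applies with exponent $\al = k/n$: there are functions $f_j \in \fr$, points $z_j \to z_0$, and scales $\rho_j \to 0^+$ so that
\[
g_j(\zeta) = \rho_j^{-k/n} f_j(z_j + \rho_j\zeta) \longrightarrow g(\zeta)
\]
locally uniformly in the spherical metric, where $g$ is a non-constant meromorphic function on $\C$ with bounded spherical derivative. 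The point of the exponent $k/n$ is that it is derivative-invariant for the $k$-th derivative of the $n$-th power: a direct computation gives $(g_j^n)^{(k)}(\zeta) = (f_j^n)^{(k)}(z_j + \rho_j\zeta)$, so the scaling does not disturb the quantity that the hypothesis controls.

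Next I would transfer the three hypotheses to $g$. Writing $(f_j^n)^{(k)}(z_j+\rho_j\zeta) - \psi(z_j+\rho_j\zeta) = \bigl[(g_j^n)^{(k)} - \psi(z_j+\rho_j\zeta)\bigr](\zeta)$ and letting $j\to\ity$, the factor $\psi(z_j+\rho_j\zeta)$ tends to the constant $\psi(z_0)$. Thus I would split into two cases. If $\psi(z_0) \neq 0$, Hurwitz's theorem shows that $(g^n)^{(k)} - \psi(z_0)$ has only zeros of multiplicity at least $m$. Hypothesis (2), that $\psi$ and $f$ share no zeros, ensures that the zeros of $f_j$ (hence of $g$) are not zeros of $(f_j^n)^{(k)}-\psi$, so they are genuinely available as zeros of $g$ of high order; and hypothesis (3), that poles outnumber zeros, passes to the limit to give $\overline N(r,1/g) \leq \overline N(r,g)$ up to the usual error. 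If $\psi(z_0)=0$, then $z_0$ is a zero of $\psi$ of some order $s \leq p$; here the limiting relation degenerates to $(g^n)^{(k)}$ having zeros of multiplicity at least $m$, and the bound $s\le p$ is exactly what feeds the $(p+1)$--factors appearing in (a) and (b).

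With these properties in hand, I would derive the contradiction by applying Nevanlinna's second fundamental theorem (in the Milloux--Yang form) to $F = g^n$. The scheme is to estimate, schematically,
\[
T(r,F) \le \overline N(r,F) + \overline N\!\left(r,\tfrac1F\right) + \overline N\!\left(r,\tfrac{1}{F^{(k)}-\psi(z_0)}\right) + S(r,F),
\]
then to bound each term: the zero-counting of $F^{(k)}-\psi(z_0)$ is cut by the factor $1/m$ coming from the multiplicity hypothesis (1), the zero-counting of $F$ is controlled by the zeros of $g$, and the pole term is dominated using hypothesis (3); relating $T(r,F)$ to $T(r,g)$ through $F=g^n$ and absorbing the logarithmic-derivative contributions, all the $n$'s, $k$'s, $m$'s and $(p+1)$'s assemble into a single coefficient. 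Conditions (a) and (b) are precisely the statements that this coefficient is strictly less than $1$ (the two inequalities covering, respectively, the estimate governed by $m$ together with the $\psi$-zero contribution, and the estimate governed by the pole--zero balance), so one arrives at $(1-c)\,T(r,g) \le S(r,g)$ with $c<1$, forcing $T(r,g)=S(r,g)$ and hence $g$ constant --- a contradiction. The rational (non-transcendental) possibility for $g$ must be excluded by a parallel but elementary degree count.

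I expect the main obstacle to be this last, quantitative step: carrying out the Milloux-type estimate so that the multiplicity bound $m$, the power $n$, the derivative order $k$, and the vanishing order $p$ of $\psi$ combine in exactly the form displayed in (a) and (b). In particular, the delicate points are (i) the correct truncated counting of the zeros of $(g^n)^{(k)}-\psi(z_0)$ and their interaction with the zeros and poles of $g$, and (ii) the case $\psi(z_0)=0$, where one loses the nonzero target value and must instead exploit the order-$\le p$ vanishing of $\psi$ at $z_0$ to keep the pole--zero counting inequality (3) effective; getting the bookkeeping of these $(p+1)$--terms right is what makes both hypotheses (a) and (b) necessary rather than a single inequality.
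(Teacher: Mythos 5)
Your outline does work at points where $\psi(z_0)\neq 0$: there the rescaling $g_j(\zeta)=\rho_j^{-k/n}f_j(z_j+\rho_j\zeta)$, the identity $(g_j^n)^{(k)}(\zeta)=(f_j^n)^{(k)}(z_j+\rho_j\zeta)$, Hurwitz's theorem, and a Milloux estimate applied to $g^n$ with the nonzero target $\psi(z_0)$ reproduce, essentially, the paper's Lemma \ref{lema 4}, and the inequality $\frac{k+2}{n}+\frac1m+\frac{k}{mn}<1$ (implied by your condition (a)) closes that case. The genuine gap is the case $\psi(z_0)=0$, which you flag as ``delicate'' but do not resolve, and the statement you propose for it cannot be made to work. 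If $\psi(z_0)=0$ then $\psi(z_j+\rho_j\zeta)\to 0$, and the limiting condition you extract --- that $(g^n)^{(k)}$ has only zeros of multiplicity at least $m$ --- is far too weak: it is vacuously satisfied by any limit $g$ for which $(g^n)^{(k)}$ is zero-free (for instance $g(\zeta)=e^{\zeta/n}$, which has bounded spherical derivative and is a perfectly admissible Zalcman limit), and no Milloux/second-fundamental-theorem estimate produces a contradiction when the target value of the $k$-th derivative is $0$; Lemma \ref{milloux} requires the nonzero value $1$, and the analogous inequality with $0$ in place of $1$ is simply false (try $f=e^z$). Knowing that the vanishing order $s$ of $\psi$ at $z_0$ satisfies $s\le p$ does not by itself feed any $(p+1)$ into your estimate, because after your rescaling the vanishing order of $\psi$ has been erased from the limit.

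What is missing is the paper's central device for exactly this situation: instead of rescaling $f_j$ directly, write $\psi_j(z)=z^l\phi_j(z)$ near the zero ($l\le p$, $\phi_j(0)\to 1$) and study the auxiliary family $\mathcal G=\{g_j=f_j^n/\psi_j\}$. Hypothesis (2) forces each $g_j$ to have a pole at $0$, and Zalcman's lemma applied to $\mathcal G$ (with the dichotomy $z_j/\rho_j\to\infty$ versus $z_j/\rho_j\to\alpha$ finite) produces, in the hard case, a limit $H$ of $f_j^n(\rho_j\zeta)/\rho_j^{k+l}$ for which the hypothesis becomes: all zeros of $H^{(k)}(\zeta)-\zeta^l$ have multiplicity at least $m$. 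That is, the degenerate target $0$ is replaced by the nonzero polynomial target $\zeta^l$, which is where $l\le p$ (hence the $p+1$ in (a) and (b)) genuinely enters. The contradiction then needs a generalized Milloux argument involving $H^{(k+l)}-l!$ and $H^{(k+l+1)}$ when $H$ is transcendental (this is where (a) is used), and separate degree counts when $H$ is rational or polynomial, in which hypothesis (3) enters via the argument principle (giving $N'\ge N$) and condition (b) is used --- your suggestion that (3) passes to the limit as $\overline N(r,1/g)\le\overline N(r,g)$ is not justified and is not how the hypothesis is exploited. Finally, normality of $\mathcal G$ does not immediately yield normality of $\fr$: one more step is required (since $g_j(0)=\infty$, normality gives $|g_j|\ge 1$ near $0$, so $1/f_j^n=1/(g_j\psi_j)$ is holomorphic and bounded there, and the maximum modulus principle concludes). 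None of these ingredients appears in your sketch, so the proof as proposed cannot be completed at the zeros of $\psi$.
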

As an application of our result the following corollary is strengthened version of Schwick's result \cite{Sch1}.
\begin{corollary}
Let $n, k$ be positive integers such that  $n\geq k+2$, let $\mathcal F$ be a family of functions meromorphic  in 
$ D$.  If each $f\in \mathcal F$ satisfies $(f^n)^{(k)}(z)\neq 1$ for $z\in  D$, 
then $\mathcal F$ is a normal family.
\end{corollary}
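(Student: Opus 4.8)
The plan is to argue by contradiction via the rescaling method, in the same spirit as the proof of Theorem \ref{theorem 1}, reducing the normality statement to a Picard‑type value‑distribution assertion on $\C$. Since normality is a local property, it suffices to show $\fr$ is normal at each $z_0\in D$, so I may assume $D$ is the unit disc and $z_0=0$. Suppose $\fr$ fails to be normal at $z_0$. Applying the Pang--Zalcman rescaling lemma with exponent $\alpha=k/n$ (admissible since $0<k/n<1$ when $n\geq k+2$), I obtain $f_j\in\fr$, points $z_j\to z_0$ and scales $\rho_j\to 0^{+}$ with
$$g_j(\zeta):=\rho_j^{-k/n}f_j(z_j+\rho_j\zeta)\longrightarrow g(\zeta)$$
locally uniformly in the spherical metric, where $g$ is a non-constant meromorphic function on $\C$ satisfying $g^{\#}(\zeta)\leq g^{\#}(0)=1$; in particular $g$ has finite order (at most $2$). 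The decisive feature of this normalization is that the scaling factor cancels in the relevant expression: a direct computation gives $(g_j^{\,n})^{(k)}(\zeta)=(f_j^{\,n})^{(k)}(z_j+\rho_j\zeta)$.

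Next I would transfer the hypothesis to the limit. Since $(f_j^{\,n})^{(k)}\neq 1$ on $D$, each $(g_j^{\,n})^{(k)}-1$ is zero-free, and on compact subsets of $\C$ away from the poles of $g$ one has $(g_j^{\,n})^{(k)}\to (g^{n})^{(k)}$. By Hurwitz's theorem the limit $(g^{n})^{(k)}-1$ is then either identically zero or nowhere zero. The first alternative is impossible: $(g^{n})^{(k)}\equiv 1$ would force $g^{n}$ to be a polynomial of degree exactly $k$, which for a non-constant meromorphic $g$ cannot occur (a pole of $g$ gives a pole of $g^n$, while if $g$ is a polynomial then $\deg g^{n}=n\deg g\geq n>k$). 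Hence it remains to rule out a non-constant meromorphic function $g$ on $\C$, of finite order, with $(g^{n})^{(k)}(\zeta)\neq 1$ everywhere, under $n\geq k+2$. When $g$ is rational, $(g^{n})^{(k)}-1$ is a zero-free rational function, so it has the form $c/Q(\zeta)$ with $c\neq 0$ and $Q$ a polynomial; a direct count of the zeros and poles of $g^{n}$ (each of multiplicity at least $n$) against the degrees forced by this equation yields the same numerical constraint and a contradiction for $n\geq k+2$.

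The heart of the matter is the transcendental case. Here I would set $F=g^{n}$ and apply a Milloux-type form of Nevanlinna's second main theorem, of the shape
$$T(r,F)\leq \overline{N}(r,F)+N\left(r,\frac{1}{F}\right)+N\left(r,\frac{1}{F^{(k)}-1}\right)-N\left(r,\frac{1}{F^{(k+1)}}\right)+S(r,F).$$
Because $F^{(k)}=(g^{n})^{(k)}\neq 1$, the third term vanishes. The structural input is that every zero and every pole of $F=g^{n}$ has multiplicity at least $n$: this yields $\overline{N}(r,F)\leq \tfrac1n N(r,F)\leq \tfrac1n T(r,F)$, while comparing $N(r,1/F)$ with $N(r,1/F^{(k+1)})$ at the high-order zeros of $F$ bounds the remaining contribution by a fixed multiple of $\tfrac1n T(r,F)$. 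Collecting the estimates gives an inequality of the form $\big(1-\tfrac{k+2}{n}+o(1)\big)T(r,F)\leq S(r,F)$; since $g$ has finite order, $S(r,F)=o(T(r,F))$, so $T(r,F)$ must be bounded and $g$ rational, contradicting transcendence. This contradiction completes the argument and shows $\fr$ is normal.

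I expect the delicate step to be the sharp bookkeeping in this last inequality. A crude application of the second main theorem only delivers the threshold $n\geq k+3$, which is precisely Schwick's bound; squeezing out the extra unit to reach $n\geq k+2$ requires careful accounting of the zeros of $F^{(k+1)}$ generated by the multiple zeros and poles of $g^{n}$, together with the finite-order control of the error term. This is exactly the type of refined estimate that underlies the proof of Theorem \ref{theorem 1}, specialized to the omitted value ($\psi\equiv 1$, hence $m=\ity$), and it is where the analytic work of the corollary is concentrated.
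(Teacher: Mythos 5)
Your reduction is sound, and it is essentially the paper's own mechanism: your rescaling of $f_j$ with exponent $\alpha=k/n$ is the same normalization as the paper's Lemma \ref{lema 4}, which rescales $f_j^n$ with exponent $k$, and the Hurwitz step and the exclusion of $(g^n)^{(k)}\equiv 1$ are both correct. The genuine gap is exactly the step you postpone, and it cannot be filled the way you suggest. With $F=g^n$ and $N\big(r,\tfrac{1}{F^{(k)}-1}\big)=0$, Lemma \ref{milloux} gives at best
\begin{align*}
T(r,F) &\leq \overline{N}(r,F)+N\left(r,\frac{1}{F}\right)-N\left(r,\frac{1}{F^{(k+1)}}\right)+S(r,F)\\
&\leq \frac{1}{n}\,T(r,F)+(k+1)\,\overline{N}\left(r,\frac{1}{F}\right)+S(r,F)
\leq \frac{k+2}{n}\,T(r,F)+S(r,F),
\end{align*}
since the only structural input available is that zeros and poles of $F$ have multiplicity at least $n$. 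This is a contradiction precisely when $n\geq k+3$; at $n=k+2$ it reads $T(r,F)\leq T(r,F)+S(r,F)$ and says nothing, and each estimate above is already sharp, so no ``careful accounting'' inside this inequality recovers the missing unit. Your fallback also fails: specializing Theorem \ref{theorem 1} (equivalently Lemma \ref{lema 4}) to $\psi\equiv 1$, $p=0$, $m\to\infty$ turns hypothesis (a) into $\frac{k+2}{n}<1$, i.e.\ $n\geq k+3$, and hypothesis (b) into $\frac{2(k+1)}{n}<1$, i.e.\ $n\geq 2k+3$; neither admits $n=k+2$. (This defect is inherited from the paper itself: the corollary is asserted there without proof as an ``application'' of Theorem \ref{theorem 1}, and the same computation shows the theorem's hypotheses are never met when $n=k+2$, so the extra strength over Schwick is not actually established anywhere.)

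It is worth locating where the obstruction really sits. In the rational case your plan does close at $n=k+2$: writing $F^{(k)}=P_k/Q_k$ in lowest terms, zero-freeness of $F^{(k)}-1$ forces $P_k-Q_k=c\neq 0$, hence $P_k'=Q_k'$; counting the zeros of $Q_k'$ off the poles (at most $t-1$ of them) against the zeros of $P_k'$ at the zeros of $F$ gives $N-(k+1)s\leq t-1$, while $\deg P_k=\deg Q_k$ gives $N'\leq N-k$, and with $s\leq N/n$, $t\leq N'/n$ one gets $\big(1-\tfrac{k+2}{n}\big)N\leq -1-\tfrac{k}{n}<0$, a genuine contradiction for $n\geq k+2$ thanks to the negative constant term. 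The problem is entirely the transcendental case, which cannot be avoided since the Zalcman limit (of order at most two) may well be transcendental. Closing that case at $n=k+2$ requires an input strictly stronger than Milloux's inequality --- for instance a Wang--Fang type value-distribution theorem asserting that $(f^n)^{(k)}$ assumes every nonzero value infinitely often for transcendental meromorphic $f$ when $n\geq k+1$ --- and until such a result is invoked and verified, your proof establishes only Schwick's original bound $n\geq k+3$.
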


\section{Some Notations}
Let $\Delta=\{z: |z|<1\}$ be the unit disk and $\Delta(z_0, r):=\{z: |z-z_0|<r\}$ and $\Delta'=\{z: 0<|z|<1\}$. 
We use the following standard functions of value distribution theory, namely
\begin{center}
$T(r,f),  m(r,f),  N(r,f)\ \text{and}\ \overline{N}(r,f)$.
\end{center}
We let $S(r,f)$ be any function satisfying
\begin{center}
$S(r,f)=o\big(T(r,f)\big)$,  as $r\rightarrow +\ity,$
\end{center}
 possibly outside of a set with finite measure.
 
 \section{Preliminary results}
  In order to prove our results we need  the following Lemmas.
\begin{lemma}[Zalcman's lemma]\cite{Zalc 1,Zalc}\label{lem1} Let $\mathcal F$ be a family of meromorphic  
functions in the unit disk  $\Delta$, with the property that for every function $f\in \mathcal F,$  the zeros of $f$ are of multiplicity at least $l$ and the poles of $f$ are of multiplicity at least $k$ . If $\mathcal F$ is not normal at $z_0$ in $\Delta$, then for  $-l< \alpha <k$, there exist
\begin{enumerate}
\item{ a sequence of complex numbers $z_n \rightarrow z_0$, $|z_n|<r<1$},
\item{ a sequence of functions $f_n\in \mathcal F$ },
\item{ a sequence of positive numbers $\rho_n \rightarrow 0$},
\end{enumerate}
such that $g_n(\zeta)=\rho_n^{\alpha}f_n(z_n+\rho_n\zeta) $ converges to a non-constant meromorphic function $g$ on $\C$ with $g^{\#}(\zeta)\leq g^{\#}(0)=1$. Moreover $g$ is of order at most two .
\end{lemma}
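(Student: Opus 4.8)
The plan is to derive the conclusion from the classical Marty criterion together with Zalcman's rescaling (renormalization) procedure, carried out so as to keep track of the exponent $\alpha$. Throughout I may assume $z_0=0$ and that $\mathcal F$ contains a sequence $\{f_n\}$ witnessing non-normality at $0$, whose zeros have multiplicity at least $l$ and whose poles have multiplicity at least $k$. First I would invoke Marty's criterion: non-normality at $0$ means the spherical derivatives $f_n^{\#}(z)=|f_n'(z)|/(1+|f_n(z)|^2)$ are not locally uniformly bounded, so after fixing $r$ with $\overline{\Delta(0,r)}\subset\Delta$ and passing to a subsequence I may arrange that $\max_{|z|\le r} f_n^{\#}(z)\to\infty$.

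The heart of the argument is the selection of centres $z_n$ and scales $\rho_n$. For the weighted rescaling $g_n(\zeta)=\rho_n^{\alpha}f_n(z_n+\rho_n\zeta)$ one computes directly
\[
g_n^{\#}(\zeta)=\frac{\rho_n^{1+\alpha}\,|f_n'(z_n+\rho_n\zeta)|}{1+\rho_n^{2\alpha}\,|f_n(z_n+\rho_n\zeta)|^2},
\]
so the natural object to control is this \emph{modified} spherical derivative of $f_n$. Following Zalcman, I would adapt the maximal selection to this modified derivative: introduce a boundary-vanishing weight on $\overline{\Delta(0,r)}$, let $z_n$ be a point at which the weighted modified derivative attains its (large) maximum, and choose $\rho_n\to0^{+}$ simultaneously so that the normalization $g_n^{\#}(0)=1$ holds. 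The maximality of $z_n$ then forces $g_n^{\#}(\zeta)\le 1+o(1)$ uniformly on compact subsets of $\C$, since for bounded $\zeta$ the centre $z_n+\rho_n\zeta$ lies deep inside the disc where the weight is essentially constant.

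It remains to see that the limit is a genuine non-constant meromorphic function, and this is exactly where the restriction $-l<\alpha<k$ and the multiplicity bounds enter. The inequality $\alpha>-l$ prevents the factor $\rho_n^{\alpha}$ from crushing $g_n$ to $0$ near the zeros of $f_n$ (multiplicity at least $l$), while $\alpha<k$ prevents $g_n$ from escaping to $\infty$ near the poles (multiplicity at least $k$); together they keep $\{g_n\}$ away from the degenerate limits $0$ and $\infty$. Since $g_n^{\#}\le 1+o(1)$ on every disc, Marty's criterion makes $\{g_n\}$ normal on $\C$, so a subsequence converges spherically locally uniformly to a meromorphic $g$; the normalizations pass to the limit to give $g^{\#}(0)=1$, whence $g$ is non-constant, and $g^{\#}(\zeta)\le g^{\#}(0)=1$ for all $\zeta$.

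Finally, the order estimate is immediate from $g^{\#}\le 1$: the Ahlfors--Shimizu characteristic satisfies
\[
T_0(r,g)=\int_0^{r}\frac{1}{t}\left(\frac{1}{\pi}\int_{|\zeta|<t}\big(g^{\#}(\zeta)\big)^2\,dm(\zeta)\right)dt\le\int_0^{r}\frac{t^2}{t}\,dt=\frac{r^2}{2},
\]
so $T(r,g)=O(r^2)$ and $g$ has order at most two. The main obstacle is the selection of the second step: arranging simultaneously $g_n^{\#}(0)=1$ and $g_n^{\#}\le 1+o(1)$ in the presence of the nontrivial weight $\rho_n^{\alpha}$, and then verifying that the multiplicity hypotheses with $-l<\alpha<k$ genuinely exclude the limits $g\equiv 0$ and $g\equiv\infty$. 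Once that is in place, the passage to the limit and the order bound are routine.
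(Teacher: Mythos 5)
The paper gives no proof of this lemma at all: it is quoted as a known result of Zalcman--Pang type, with references to \cite{Zalc 1,Zalc}, so there is no internal argument to compare against. Judged on its own, your outline reproduces the skeleton of the standard literature proof correctly: Marty's criterion to start, the correct computation of the rescaled spherical derivative $g_n^{\#}(\zeta)=\rho_n^{1+\alpha}|f_n'(z_n+\rho_n\zeta)|/\bigl(1+\rho_n^{2\alpha}|f_n(z_n+\rho_n\zeta)|^2\bigr)$, normality of $\{g_n\}$ from the uniform bound $g_n^{\#}\le 1+o(1)$, and the Ahlfors--Shimizu estimate $T_0(r,g)\le r^2/2$ yielding order at most two. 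Those parts are fine as written.

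There is, however, a genuine gap at exactly the step you yourself flag as ``the main obstacle,'' and it is not a routine adaptation. For $\alpha=0$ one maximizes $(r-|z|)f_n^{\#}(z)$ and sets $\rho_n$ equal to the reciprocal of the maximum; maximality then gives $g_n^{\#}\le 1+o(1)$ by comparing the weight at $z_n+\rho_n\zeta$ with the weight at $z_n$. For $\alpha\neq 0$ the quantity you need to maximize depends on $\rho_n$ itself (through the factor $\rho_n^{2\alpha}$ in the denominator), so no single weighted maximum does the job; Pang's argument requires a two-parameter balancing, and this is precisely where the multiplicity hypotheses enter: at a zero of multiplicity at least $l$ one has locally $|f_n'|\le C|f_n|^{1-1/l}$ (with the dual estimate at poles of multiplicity at least $k$, applied to $1/f_n$), and it is these inequalities, combined with $-l<\alpha<k$, that keep the modified derivative under control where $|f_n|$ is very small or very large, so that the selection can be made with $\rho_n\to 0$ and $g_n^{\#}(0)=1$. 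Your stated mechanism for the hypotheses --- that $-l<\alpha<k$ ``excludes the degenerate limits $g\equiv 0$ and $g\equiv\infty$'' --- is misplaced: those limits are already impossible once $g_n^{\#}(0)=1$ passes to the limit, since spherical local uniform convergence to any constant (including $\infty$) forces $g_n^{\#}\to 0$ locally uniformly. So as it stands the proposal is a correct plan whose decisive step is promised rather than performed, and whose explanation of where the multiplicity conditions bite does not match their actual role in the proof being cited.
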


\begin{lemma}\label{lemma 5}\cite{lahiri,lahiri 1}
Let ${R={P}/{Q}} $ be a rational function and $Q$ be non-constant. Then $\left(R^{(k)}\right)_{\ity}\leq(R)_{\ity}-k,$ where $k$ is a positive integer,  $(R)_{\ity}=$ $\text{deg}\ (P)-\text{deg}\ (Q)$ and $\text{deg}\ (P)$ denotes the degree of P.
\end{lemma}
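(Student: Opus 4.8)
The plan is to prove the degree-at-infinity bound for the $k$-th derivative of a rational function purely by elementary means: establish it first for $k=1$ by a direct computation, then bootstrap to general $k$ by induction. Recall that for a rational function $R=P/Q$ with $Q$ non-constant, the quantity $(R)_\ity = \deg(P)-\deg(Q)$ measures the growth order at $\ity$: it is the integer $d$ with $R(z)\sim c\,z^{d}$ as $z\to\ity$ for some nonzero constant $c$. The assertion is that differentiating drops this order by exactly the number of derivatives taken, i.e.\ $(R^{(k)})_\ity \leq (R)_\ity - k$.

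\emph{First I would handle the base case $k=1$.} Writing $R=P/Q$ in lowest terms (replacing $P,Q$ by $P/\gcd,\,Q/\gcd$ does not change $(R)_\ity$), the quotient rule gives
\[
R' = \frac{P'Q - PQ'}{Q^2}.
\]
The denominator has degree $2\deg(Q)$. For the numerator, set $p=\deg(P)$ and $q=\deg(Q)$; then $\deg(P'Q)\leq (p-1)+q$ and $\deg(PQ')\leq p+(q-1)$, so $\deg(P'Q-PQ')\leq p+q-1$. Hence
\[
(R')_\ity \;=\; \deg(P'Q-PQ') - 2q \;\leq\; (p+q-1)-2q \;=\; (p-q)-1 \;=\; (R)_\ity-1,
\]
which is the claim for $k=1$. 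Note I only need the \emph{inequality}, so possible cancellation in the leading term of $P'Q-PQ'$ (which occurs precisely when $p=q$) only helps.

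\emph{For the inductive step}, suppose $(S^{(j)})_\ity \leq (S)_\ity - j$ holds for every rational function $S$ (whose denominator is non-constant) and every $1\leq j\leq k-1$. Given $R=P/Q$ with $\deg(Q)\geq 1$, apply the $k=1$ case to the rational function $R^{(k-1)}$: provided its denominator is non-constant we get $(R^{(k)})_\ity \leq (R^{(k-1)})_\ity - 1$, and then the induction hypothesis applied to $R$ with $j=k-1$ gives $(R^{(k-1)})_\ity \leq (R)_\ity-(k-1)$, so that $(R^{(k)})_\ity \leq (R)_\ity - k$ as desired. The one point requiring care---and the main obstacle---is the hypothesis ``$Q$ non-constant'' propagating through the differentiation: I must verify that each successive derivative $R^{(j)}$ for $1\leq j\leq k-1$ still has a non-constant denominator so that the $k=1$ lemma applies to $R^{(k-1)}$. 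This is where the assumption $\deg(Q)\geq 1$ is essential: if $R$ is a polynomial the statement can fail, but once $Q$ is non-constant, $R^{(j)}$ is a genuinely non-polynomial rational function for every $j\geq 0$ (a rational function is a polynomial iff its derivative is, as can be seen from the partial-fraction decomposition, where a pole of order $\mu\geq 1$ in $R$ produces poles of order $\mu+1\geq 2$ in $R'$). I would record this observation as a short remark, after which the induction closes cleanly and the inequality follows for all positive integers $k$.
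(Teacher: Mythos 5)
The paper itself offers no proof of this lemma: it is quoted, with attribution, from Zeng--Lahiri \cite{lahiri,lahiri 1}, so there is no in-paper argument to compare yours against; your proposal supplies a self-contained elementary proof of a cited result. On its own merits the argument is correct and is the natural one: the base case via the quotient rule, $\deg(P'Q-PQ')\leq \deg P+\deg Q-1$ against $\deg(Q^2)=2\deg Q$, using that $(R')_\infty$ is independent of the chosen representation of $R'$, and then induction on $k$. You also correctly isolate the one genuine danger point, namely that the hypothesis must survive differentiation, and your partial-fraction observation (a pole of order $\mu$ of $R$ yields a pole of order $\mu+1$ of $R'$, so derivatives of a non-polynomial rational function are never polynomials) is exactly the right tool. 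Two small points to tighten. First, when you pass to lowest terms at the outset, the reduced denominator could become constant even though $Q$ was not (e.g.\ $P=zQ$); in that degenerate case $R$ is a polynomial and $R^{(k)}$ can vanish identically for large $k$, so the statement should be read as assuming $R$ is genuinely non-polynomial --- which is the situation in all of the paper's applications \eqref{eq4.3}--\eqref{eq4.6}, where the functions have actual poles at the $\beta_j$, and it is precisely the case your pole-propagation remark covers. Second, the displayed equality $(R')_\infty=\deg(P'Q-PQ')-2\deg Q$ presupposes $P'Q-PQ'\not\equiv 0$, i.e.\ $R$ non-constant; the constant case is trivial but deserves a word. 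With those two sentences added, the proof is complete and, being purely algebraic, arguably preferable to leaving the lemma as a black-box citation.
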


\begin{lemma}\label{milloux}\cite{Hay,ccyang,Yang} 
Suppose $f(z)$ is a non-constant meromorphic function in the complex plane and $k$ is a positive integer. Then
\begin{equation}
T(r, f)< \overline{N}(r,f) + N\left(r, \frac{1}{f} \right) + N\left(r, \frac{1}{f^{(k)}-1} \right) -N\left(r, \frac{1}{f^{(k+1)}} \right) + S(r, f).
\end{equation}
\end{lemma}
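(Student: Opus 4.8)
The plan is to deduce the inequality from the ramified form of Nevanlinna's Second Fundamental Theorem applied to the derivative $g:=f^{(k)}$, with the First Fundamental Theorem and the lemma on the logarithmic derivative serving as the only auxiliary tools; these are exactly the standard value-distribution quantities $T, m, N, \overline{N}, S$ set up in Section~2.

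First, by the First Fundamental Theorem, $T(r,f)=m\!\left(r,\frac1f\right)+N\!\left(r,\frac1f\right)+O(1)$, so it suffices to control the proximity function $m\!\left(r,\frac1f\right)$. Writing $\frac1f=\frac{f^{(k)}}{f}\cdot\frac{1}{f^{(k)}}$ and using the lemma on the logarithmic derivative in the form $m\!\left(r,\frac{f^{(k)}}{f}\right)=S(r,f)$, I obtain $m\!\left(r,\frac1f\right)\le m\!\left(r,\frac1g\right)+S(r,f)$, where $g=f^{(k)}$. Thus the whole problem reduces to showing $m\!\left(r,\frac1g\right)\le\overline{N}(r,f)+N\!\left(r,\frac1{g-1}\right)-N\!\left(r,\frac1{g'}\right)+S(r,f)$, with $g'=f^{(k+1)}$.

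To get this I would invoke the Second Fundamental Theorem in its ramified form at the two values $0$ and $1$, namely $m(r,g)+m\!\left(r,\frac1g\right)+m\!\left(r,\frac1{g-1}\right)\le 2T(r,g)-N_1(r)+S(r,g)$, where the ramification term is $N_1(r)=N\!\left(r,\frac1{g'}\right)+2N(r,g)-N(r,g')$. Substituting $m(r,g)=T(r,g)-N(r,g)$ and, again by the First Fundamental Theorem, $m\!\left(r,\frac1{g-1}\right)=T(r,g)-N\!\left(r,\frac1{g-1}\right)+O(1)$ collapses the three copies of $T(r,g)$ and yields $m\!\left(r,\frac1g\right)\le N(r,g)+N\!\left(r,\frac1{g-1}\right)-N_1(r)+S(r,g)$.

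The decisive step is then the bookkeeping of $N_1(r)$. Since $g=f^{(k)}$ and $g'=f^{(k+1)}$ carry the poles of $f$ with orders raised by $k$ and $k+1$ respectively, one has the identity $N(r,g')=N(r,g)+\overline{N}(r,g)$, so that $N_1(r)=N\!\left(r,\frac1{g'}\right)+N(r,g)-\overline{N}(r,g)$. Plugging this in cancels the $N(r,g)$ contributions and leaves precisely $m\!\left(r,\frac1g\right)\le\overline{N}(r,g)+N\!\left(r,\frac1{g-1}\right)-N\!\left(r,\frac1{g'}\right)+S(r,g)$; finally $\overline{N}(r,g)=\overline{N}(r,f)$, $S(r,g)=S(r,f)$, $g-1=f^{(k)}-1$ and $g'=f^{(k+1)}$ convert this into the required bound, which combined with the first two paragraphs gives the assertion. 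I expect the only genuinely delicate point to be the correct handling of the ramification term—both the sign convention in $N_1(r)$ and the pole-order identity $N(r,f^{(k+1)})=N(r,f^{(k)})+\overline{N}(r,f^{(k)})$—since this is exactly what produces the negative term $-N\!\left(r,\frac1{f^{(k+1)}}\right)$ and reduces the pole contribution from $N(r,f)$ down to $\overline{N}(r,f)$.
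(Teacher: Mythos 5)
Your derivation is correct, and the comparison here is necessarily with the literature rather than with the paper: the paper states this lemma without proof, citing Hayman, Yang--Yi and Yang, and your argument --- reducing $m\left(r,\frac{1}{f}\right)$ to $m\left(r,\frac{1}{f^{(k)}}\right)$ via the lemma on the logarithmic derivative, then applying the ramified second fundamental theorem to $g=f^{(k)}$ at the values $0$, $1$, $\infty$ and simplifying the ramification term with the pole identity $N(r,g')=N(r,g)+\overline{N}(r,g)$ --- is precisely the classical proof of Milloux's inequality (Hayman's Theorem 3.2) given in those sources, including the correct bookkeeping that turns $N(r,f)$ into $\overline{N}(r,f)$ and produces the term $-N\left(r,\frac{1}{f^{(k+1)}}\right)$. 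The only point worth flagging is the degenerate case where $f^{(k)}$ is constant (i.e., $f$ a polynomial of degree at most $k$), to which the second fundamental theorem for $g$ does not apply and which must be checked or excluded separately, exactly as in the standard treatments.
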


\begin{lemma}\label{lema 4}
Let $\fr = \{f_j\}$ be a family of meromorphic functions defined on $D\subset \C$. Let $\phi_j(z)$ be a sequence 
of holomorphic functions on $D$ such that $\phi_j(z)\rightarrow \phi(z)$ locally uniformly on $D$, where $\phi(z)
\neq 0$ is a holomorphic function on $D$.  Let $k, m \text{\ and\ } n$ be three positive integers such that $$\displaystyle{\frac{k+2}{n}+\frac{1}{m}+\frac{k}{mn}<1}.$$ 
If each zero of $\left(f_j^n \right)^{(k)}-\phi_j(z)$ has multiplicity at least $m$, then $\fr$ is normal in $D$.
\end{lemma}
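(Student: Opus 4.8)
The plan is to argue by contradiction via a Zalcman rescaling, reducing the normality statement to the non-existence of a single ``bad'' meromorphic function on $\C$, which I then exclude using Milloux's inequality. Since normality is local, it suffices to prove it at each $z_0\in D$; I fix such a $z_0$ and work on a small disk about it, reading the hypothesis ``$\phi\neq0$'' as $\phi$ being zero-free on $D$ (this is the reading under which the stated bound, which does not involve the orders of zeros of $\phi$, can suffice), so that $c:=\phi(z_0)\neq0$. Suppose $\fr$ is not normal at $z_0$. Condition (a) forces $n>k+2$, so $\alpha=-k/n\in(-1,0)$ is an admissible exponent; applying Lemma \ref{lem1} (with the trivial multiplicity bounds $\ell=\kappa=1$) I obtain $z_j\to z_0$, $f_j\in\fr$, $\rho_j\downarrow0$ with
\[
g_j(\zeta)=\rho_j^{-k/n}f_j(z_j+\rho_j\zeta)\to g(\zeta)
\]
locally uniformly (spherically), where $g$ is non-constant meromorphic on $\C$.

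Next I would record the effect of the operator on the rescaling. From $f_j^n(z_j+\rho_j\zeta)=\rho_j^{k}g_j(\zeta)^n$ and the chain rule one gets $(g_j^n)^{(k)}(\zeta)=(f_j^n)^{(k)}(z_j+\rho_j\zeta)$, hence
\[
(g_j^n)^{(k)}(\zeta)-\phi_j(z_j+\rho_j\zeta)=\bigl[(f_j^n)^{(k)}-\phi_j\bigr](z_j+\rho_j\zeta).
\]
Because $\phi_j\to\phi$ locally uniformly and $z_j+\rho_j\zeta\to z_0$, the subtracted term tends to $c$ uniformly on compacta, so the left-hand side converges to $(g^n)^{(k)}-c$ locally uniformly off the poles of $g$. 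The zeros of $(f_j^n)^{(k)}-\phi_j$ have multiplicity $\geq m$, a property preserved by the affine substitution, so Hurwitz's theorem yields that every zero of $(g^n)^{(k)}-c$ has multiplicity $\geq m$. Moreover $(g^n)^{(k)}\not\equiv c$: otherwise $g^n$ would be a polynomial of degree $k<n$, forcing $g$ constant.

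I then pass to Nevanlinna theory with $G=g^n$, whose zeros and poles all have multiplicity $\geq n$ and for which the zeros of $G^{(k)}-c$ have multiplicity $\geq m$. Applying Lemma \ref{milloux} to $G/c$ gives
\[
T(r,G)<\overline{N}(r,G)+N(r,1/G)+N\bigl(r,\tfrac{1}{G^{(k)}-c}\bigr)-N\bigl(r,\tfrac{1}{G^{(k+1)}}\bigr)+S(r,G).
\]
The crux is a lower bound for $N(r,1/G^{(k+1)})$ obtained by tracking where zeros of $G^{(k+1)}$ come from. A zero of $G$ of order $a\geq n\,(>k+1)$ is a zero of $G^{(k)}$ of order $a-k\geq1$ (so it is \emph{not} a zero of $G^{(k)}-c$) and a zero of $G^{(k+1)}$ of order $a-(k+1)$; a zero of $G^{(k)}-c$ of order $b\geq m$ is a zero of $G^{(k+1)}=(G^{(k)}-c)'$ of order $b-1$. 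These two sets of points are disjoint, whence
\[
N\bigl(r,\tfrac{1}{G^{(k+1)}}\bigr)\geq\bigl[N(r,1/G)-(k+1)\overline{N}(r,1/G)\bigr]+\bigl[N(r,\tfrac{1}{G^{(k)}-c})-\overline{N}(r,\tfrac{1}{G^{(k)}-c})\bigr],
\]
and substituting collapses the inequality to $T(r,G)<\overline{N}(r,G)+(k+1)\overline{N}(r,1/G)+\overline{N}\bigl(r,\tfrac{1}{G^{(k)}-c}\bigr)+S(r,G)$.

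Finally I bound the three surviving terms by the multiplicities: $\overline{N}(r,G)\leq\tfrac1n T(r,G)$ and $(k+1)\overline{N}(r,1/G)\leq\tfrac{k+1}{n}T(r,G)$, while, using $T(r,G^{(k)})\leq T(r,G)+k\overline{N}(r,G)+S(r,G)\leq(1+\tfrac kn)T(r,G)+S(r,G)$,
\[
\overline{N}\bigl(r,\tfrac{1}{G^{(k)}-c}\bigr)\leq\tfrac1m\,T(r,G^{(k)})+O(1)\leq\Bigl(\tfrac1m+\tfrac{k}{mn}\Bigr)T(r,G)+S(r,G).
\]
Adding, I arrive at
\[
T(r,G)<\Bigl(\frac{k+2}{n}+\frac1m+\frac{k}{mn}\Bigr)T(r,G)+S(r,G),
\]
and since the coefficient is $<1$ by hypothesis (a), $T(r,G)$ must stay bounded, contradicting that $g$ (hence $G$) is non-constant; if the Zalcman limit $g$ happens to be rational, the same contradiction follows from a direct degree count through Lemma \ref{lemma 5}. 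I expect the main obstacle to be precisely the zero-counting identity for $G^{(k+1)}$: one must verify that the zeros of $G$ and those of $G^{(k)}-c$ are genuinely disjoint and transfer their orders to $G^{(k+1)}$ exactly, since it is this bookkeeping that produces the coefficient $\tfrac{k+2}{n}+\tfrac1m+\tfrac{k}{mn}$ matching condition (a); the Hurwitz transfer of the multiplicity $\geq m$ to the limit function is the other delicate point.
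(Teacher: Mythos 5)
Your proof is correct and essentially identical to the paper's: a Zalcman rescaling, Hurwitz's theorem to transfer the multiplicity-$m$ condition to the limit, then Milloux's inequality with exactly the same zero-counting for $G^{(k+1)}$ to reach the coefficient $\frac{k+2}{n}+\frac{1}{m}+\frac{k}{mn}$ and the contradiction. The only cosmetic difference is that you rescale $f_j$ with exponent $-k/n$ and set $G=g^n$, whereas the paper applies Zalcman directly to the powers $f_j^n$ with exponent $-k$; your write-up also makes explicit two steps the paper leaves implicit, namely that $(g^n)^{(k)}\not\equiv\phi(z_0)$ and the collapse of $N(r,1/G)+N\bigl(r,\tfrac{1}{G^{(k)}-c}\bigr)-N\bigl(r,\tfrac{1}{G^{(k+1)}}\bigr)$ into truncated counting functions.
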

\begin{proof}
Since normality is a local property, we assume that
${D}=\Delta$. Suppose that $\fr$ is not normal
in $\Delta$. Then there exists at least one point $z_0$ such that
$\fr$ is not normal  at the point $z_0$ in $\Delta$. Without loss of
generality we assume that $z_0=0$. Then by Lemma \ref{lem1}, for $\al=k$  there
exist
\begin{enumerate}
\item{ a sequence of complex numbers $z_j \rightarrow 0$, $|z_j|<r<1$},
\item{ a sequence of functions $f_j\in \mathcal F$ },
\item{ a sequence of positive numbers $\rho_j \rightarrow 0$},
\end{enumerate}
such that $\displaystyle{g_j(\zeta)=\frac{f_j^n(z_j+\rho_j\zeta)}{\rho_j^{k}}} $ converges locally uniformly  to a 
non-constant meromorphic function $g(\zeta)$ on $\C$ with $g^{\#}(\zeta)\leq g^{\#}(0)=1$. By Hurwitz's 
Theorem each zero of $g(\zeta)$ is of multiplicity at least $n$. Thus we have
\begin{align}
g_j^{(k)}(\zeta) - \phi_j(z_j + \rho_j\zeta) &= (f_j^n)^{(k)}(z_j+\rho_j\zeta)-\phi_j(z_j+\rho_j\zeta)\notag\\
&\rightarrow g^{(k)}(\zeta) - \phi(z_0).\label{eq3.1}
\end{align}
Since $(f_j^n)^{(k)}(z_j+\rho_j\zeta)-\phi_j(z_j+\rho_j\zeta)$ has zeros only of multiplicities at least $m$, 
therefore, by Hurwitz's Theorem, $g^{(k)}(\zeta)-\phi(z_0)$ has zeros only of multiplicities  at least $m.$ Now, 
by Lemma \ref{milloux} and Nevanlinna's first fundamental theorem, we get
\begin{align*}
T(r,g)&< \overline{N}(r, g)+ N\left(r, \frac{1}{g} \right)+ N\left(r, \frac{1}{g^{(k)}-\phi(z_0)} \right) -N\left(r, \frac{1}{g^{(k+1)}} \right) + S(r, g)\\
&\leq \frac{1}{n}N(r, g) + (k+1)\overline{N}\left(r, \frac{1}{g} \right)+  \overline{N}\left(r, \frac{1}{g^{(k)}-\phi(z_0)}\right)+ S(r, g)\\
&\leq\frac{1}{n}T(r, g) + \frac{k+1}{n}T(r, g) + \frac{1}{m}N\left(r, \frac{1}{g^{(k)}-\phi(z_0)}\right) + S(r, g)\\
&\leq \frac{k+2}{n}T(r, g) + \frac{1}{m}\left( T(r, g) + k\overline{N}(r, g) \right)+ S(r, g)\\
&\leq \left(\frac{k+2}{n} +\frac{1}{m} +\frac{k}{mn} \right) T(r, g) + S(r, g).
\end{align*}
 This is a contradiction since $\displaystyle{\frac{k+2}{n} +\frac{1}{m} +\frac{k}{mn}<1}$ and $g(\zeta)$ is a non-constant meromorphic function. This proves the Lemma.
\end{proof}

\section{Proof of Theorem \ref{theorem 1}}

Here we prove slightly stronger version of Theorem \ref{theorem 1}
\begin{theorem}
Let $\fr=\{f_j\}$ be a sequence of meromorphic functions on a domain $D\in \C$, let $k, p, m$ and $n$ be positive integers satisfying 
\begin{enumerate}
\item[(a)] $\displaystyle{\frac{k+2(p+1)}{n}+\frac{p+1}{m} +\frac{k(p+1)}{mn}<1}$,
\item[(b)] $\displaystyle{\frac{p+1}{m}+2\left(\frac{k+p+1}{n}\right)<1}$.
\end{enumerate}
Let $\{\psi_j(z)\}$ be a sequence of holomorphic functions having zeros of multiplicity at most $p$ on $D$ 
such that $\psi_j(z)\rightarrow \psi(z)$, locally uniformly  on $D$, where $\psi(z)(\not\equiv 0)$ is a holomorphic 
function on $D$.  Suppose that, for every function $f\in\fr$,
\begin{enumerate}
\item $(f_j^n)^{(k)}(z)-\psi_j(z)$ has zeros only of multiplicity at least $m$,
\item $\psi_j(z)$ and $f_j(z)$ have no common zeros in $D$,
\item number of poles of $f_j$ $($if they exist$)$ are greater than or equal to the number of zeros of $f_j$.
\end{enumerate}
Then $\fr$ is normal in $D$.

\end{theorem}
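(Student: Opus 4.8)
The plan is to reduce to the unit disk $\Delta$ and to check normality at an arbitrary point $z_0$, splitting into the cases $\psi(z_0)\ne 0$ and $\psi(z_0)=0$; since $\psi\not\equiv 0$, its zeros are isolated, so both cases are genuinely local. When $\psi(z_0)\ne 0$, I would pick $r$ so small that $\psi$ is zero-free on $\Delta(z_0,r)$. Then $\psi_j\to\psi$ locally uniformly there with $\psi$ nowhere vanishing, and because $p\ge 1$ makes each term of hypothesis (a) dominate the corresponding term of the inequality $\frac{k+2}{n}+\frac1m+\frac{k}{mn}<1$, Lemma \ref{lema 4} applies verbatim and gives normality on $\Delta(z_0,r)$. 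Thus the entire content of the theorem sits in the case $\psi(z_0)=0$.

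For that case I would take $z_0=0$, so $\psi$ has a zero of some multiplicity $q$ with $1\le q\le p$, and argue by contradiction: assume $\fr$ is not normal at $0$ and apply Zalcman's Lemma (Lemma \ref{lem1}) to the family $\{f_j^n\}$, whose zeros and poles all have multiplicity at least $n$. Hypothesis (a) forces $k+2(p+1)<n$, hence $k+q<n$, so the exponent $\alpha=-(k+q)$ lies in the admissible range $(-n,n)$. After passing to a subsequence one obtains $z_j\to 0$, $\rho_j\to 0$ with $g_j(\zeta)=\rho_j^{-(k+q)}f_j^n(z_j+\rho_j\zeta)\to g(\zeta)$, a nonconstant meromorphic function whose zeros have multiplicity at least $n$ by Hurwitz's theorem. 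Writing $\psi(z)=a\,z^{q}(1+o(1))$ with $a\ne 0$ and using $\psi_j\to\psi$ locally uniformly, the rescaled quantity $\rho_j^{-q}\big[(f_j^n)^{(k)}(z_j+\rho_j\zeta)-\psi_j(z_j+\rho_j\zeta)\big]=g_j^{(k)}(\zeta)-\rho_j^{-q}\psi_j(z_j+\rho_j\zeta)$ converges to $g^{(k)}(\zeta)-P(\zeta)$, where $P$ is a polynomial of degree exactly $q\le p$ (namely $a(\zeta+c)^{q}$ when $z_j/\rho_j\to c$ is finite). Hypothesis (2), no common zeros of $\psi_j$ and $f_j$, is what prevents the limit from degenerating and keeps $P\not\equiv 0$; the remaining subcase $z_j/\rho_j\to\infty$ I would treat by renormalizing about $z_j$, where $\psi$ behaves like a nonzero constant, so that it collapses to the situation already covered by Lemma \ref{lema 4}.

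By Hurwitz's theorem $g^{(k)}-P$ then has zeros of multiplicity at least $m$, and the endgame is a value-distribution estimate for $g$. I would invoke the Milloux-type inequality of Lemma \ref{milloux} in the form valid for a polynomial small target $P$ of degree at most $p$ in place of the constant $1$ (the degree inflating the relevant counting functions by a factor at most $p+1$), bound $\overline N(r,1/g)\le\frac1n N(r,1/g)$ from the multiplicity $\ge n$ of the zeros of $g$, use hypothesis (3) to get $N(r,1/g)\le N(r,g)$ (at least as many poles as zeros), and control the zeros of $g^{(k)}-P$ by $\tfrac1m$ of the corresponding total counting function, with Lemma \ref{lemma 5} bounding the pole contribution of the derivatives. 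Collecting the terms exactly as in the proof of Lemma \ref{lema 4} yields $T(r,g)\le C\,T(r,g)+S(r,g)$, where $C$ equals the left-hand side of (a) when the count is pushed through the derivative and the left-hand side of (b) when the pole term is estimated directly; in either regime $C<1$, contradicting the nonconstancy of $g$. This contradiction establishes normality at $0$ and completes the proof.

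The hard part will be the choice of the Zalcman exponent at a zero of $\psi$ and the control of the limiting polynomial target. The value $\alpha=-(k+q)$ is forced by the demand that $\rho_j^{-q}\psi_j(z_j+\rho_j\zeta)$ converge to a nonzero polynomial rather than to $0$ (the latter choice, $\alpha=-k$, loses $\psi$ entirely and gives no contradiction, as $g(\zeta)=\zeta^{n}$ already shows), and it is precisely here that the bound $q\le p$ and the strict inequality $k+2(p+1)<n$ from (a) are needed to remain inside the admissible range of Zalcman's Lemma. The second delicate point is reconciling the two normalizations arising from $z_j/\rho_j$ bounded versus $z_j/\rho_j\to\infty$: this dichotomy is what splits the final estimate into the two separate budgets (a) and (b), and carefully justifying the polynomial version of the Milloux inequality, with its attendant factor $p+1$, is the principal remaining technical burden.
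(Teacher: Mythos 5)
Your reduction to a zero of $\psi$ and the choice of Zalcman exponent $\alpha=-(k+q)$ (admissible since (a) gives $k+2(p+1)<n$) is a genuinely different route from the paper's, and in the regime $z_j/\rho_j\to c$ finite it is workable, at least when $\psi_j\equiv\psi$. But the subcase $z_j/\rho_j\to\infty$ is a genuine gap, not a deferred technicality, and your proposed fix fails. Lemma \ref{lema 4} requires the limiting target function to be zero-free, whereas near $z_j$ the relevant target values $\psi_j(z_j+\rho_j\zeta)$ tend to $\psi(0)=0$ at a rate that is not controlled relative to $\rho_j$. Consequently no fixed admissible exponent rescues the hypothesis: with $\alpha=-(k+q)$ the rescaled target $\rho_j^{-q}\psi_j(z_j+\rho_j\zeta)\sim a\,(z_j/\rho_j+\zeta)^q$ blows up, so the condition on the zeros of $(f_j^n)^{(k)}-\psi_j$ becomes vacuous in the limit; with $\alpha=-k$ the target tends to $0$, and the condition degenerates to ``zeros of $g^{(k)}$ have multiplicity at least $m$'', which yields no contradiction (your own example $g(\zeta)=\zeta^n$ applies here as well). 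This regime is precisely why the paper rescales the quotient family $\mathcal G=\{f_j^n/\psi_j\}$ rather than $\{f_j^n\}$: in its Case 1, a Leibniz expansion shows $(f_j^n)^{(k)}/\psi_j\to G^{(k)}$, so the hypothesis survives as ``$G^{(k)}-1$ has zeros of multiplicity at least $m$'', to which Lemma \ref{milloux} applies. Dividing by $\psi_j$ \emph{before} rescaling is the missing idea, and it cannot be recovered afterwards, since dividing your limit by $\psi(z_j)\to 0$ destroys convergence.

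There are secondary gaps as well. Your endgame invokes only a Milloux-type estimate, but such estimates, with their $S(r,\cdot)$ error, are decisive only for transcendental limits; the paper must argue separately when the limit is a non-polynomial rational function or a polynomial, by explicit degree counting, and that is where condition (b) and hypothesis (3) actually enter (the latter via the argument principle, giving total pole degree at least total zero degree for the rational limit). Your intended use of (3) as $N(r,1/g)\leq N(r,g)$ for the Zalcman limit is unjustified: (3) counts zeros and poles of $f_j$ in all of $D$, while the limit only sees the shrinking disks $\Delta(z_j,r\rho_j)$. Likewise hypothesis (2) does not serve to keep $P\not\equiv 0$ (that is automatic from $\psi\not\equiv 0$); in the paper it forces $f_j^n/\psi_j$ to have a pole at $0$, which both shapes the limit in its Case 2 and allows normality of $\mathcal G$ to be transferred back to $\fr$ at the very end via the maximum principle. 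Finally, for the stated sequential version, your convergence $\rho_j^{-q}\psi_j(z_j+\rho_j\zeta)\to a(\zeta+c)^q$ would require a rate of convergence of $\psi_j\to\psi$, which the hypotheses do not provide. That hypotheses (2) and (3) find no natural entry point in your framework is a symptom that the proposal cannot be completed without reintroducing the paper's auxiliary family.
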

\begin{proof}Since normality is a local property, we assume that
${D}=\Delta$ and $\psi_j(z)=z^l\phi_j(z)$, where $l$ is a non-negative integer with $l\leq p$, $\phi_j(0)\rightarrow 1$ and $\psi_j(z)\neq 0$ on $\Delta'$. By Lemma \ref{lema 4} it is sufficient  to prove that $\mathcal F$ is normal at $z=0.$ Consider the family $$\mathcal G = \left\{g_j(z)=\frac{f_j^n(z)}{\psi_j(z)}: f_j \in \mathcal F, z \in \Delta\right\}.$$ Since $\psi_j(z)$ and $f_j(z)$ have no common zeros for each $f_j\in\fr$ we get $g_j(0)=\ity \ \forall \ g_j\in\mathcal G$ and $g_j$ has a pole of order at least $l$ at $0$. First we prove that $\mathcal G$ is normal in $\D$. Our  proof proceeds by contradiction. Suppose that $\mathcal G$ is not normal at $z_0\in \D$. By Lemma \ref{lem1}, there exist sequences $g_j\in \mathcal G, z_j\rightarrow z_0$ and $\rho_j\rightarrow 0^+$ such that $$ G_j(\zeta)=\frac{g_j(z_j+\rho_j\zeta)}{\rho_j^k}=\frac{f_j^n(z_j+\rho_j\zeta)}{\rho_j^k\psi_j(z_j+\rho_j\zeta)}\rightarrow G(\zeta)$$ locally uniformly with respect to the spherical metric, where $G(\zeta)$ is a non-constant meromorphic function on $\C$, all of whose zeros are of multiplicity at least $n$. There are  two cases to consider.\\

\underline{Case 1.} Suppose $\displaystyle{\frac{z_j}{\rho_j}\rightarrow \ity}.$ Then since $\displaystyle{G_j\left(\frac{z_j}{\rho_j}\right)=\frac{g_j(0)}{\rho_j^k}}$, the pole of $G_j$ corresponding to that of $g_j$ drifts off to infinity and $G(\zeta)$ has only multiple poles. We claim that $$\frac{\left(f_j^n\right)^{(k)}(z_j+\rho_j\zeta)}{\psi_j(z_j+\rho_j\zeta)}\rightarrow G^{(k)}(\zeta)$$ uniformly on compact subsets of $\C$ disjoint from the poles of $G$. Indeed
\begin{align*}
g_j^{(k)}(z)=\frac{\left(f_j^n \right)^{(k)}(z)}{\psi_j(z)}-\begin{pmatrix} k\\1  \end{pmatrix}g_j^{(k-1)}(z)\frac{\psi_j'(z)}{\psi_j(z)}-\begin{pmatrix} k\\ 2 \end{pmatrix}g_j^{(k-2)}(z)\frac{\psi_j''(z)}{\psi_j(z)}\ldots - g_j(z)\frac{\psi_j^{(k)}(z)}{\psi_j(z)}
\end{align*}
and
\begin{align*}
G_j^{(k)}(\zeta)&= g_j^{(k)}(z_j+\rho_j\zeta)\\
&=\frac{\left(f_j^n \right)^{(k)}(z_j+\rho_j\zeta)}{\psi_j(z_j+\rho_j\zeta)}-\sum_{i=1}^{k}\begin{pmatrix} k\\i  \end{pmatrix}g_j^{(k-i)}(z_j+\rho_j\zeta)\frac{\psi_j^{(i)}(z_j+\rho_j\zeta)}{\psi_j(z_j+\rho_j\zeta)}\\
&=\frac{\left(f_j^n \right)^{(k)}(z_j+\rho_j\zeta)}{\psi_j(z_j+\rho_j\zeta)}-\begin{pmatrix} k\\1  \end{pmatrix}g_j^{(k-1)}(z_j+\rho_j\zeta)\left(\frac{l}{z_j+\rho_j\zeta}+\frac{\phi_j'(z_j+\rho_j\zeta)}{\phi_j(z_j+\rho_j\zeta)} \right)\ldots\\
&  \qquad g_j(z_j+\rho_j\zeta)\left(\sum_{i=0}^{k}\frac{l!}{(l-k+i)!(z_J+\rho_j\zeta)^{k-i}}\frac{\phi_j^{(i)}(z_j+\rho_j\zeta)}{\phi_j(z_j+\rho_j\zeta)} \right)\\
& =\frac{\left(f_j^n \right)^{(k)}(z_j+\rho_j\zeta)}{\psi_j(z_j+\rho_j\zeta)}-\begin{pmatrix} k\\1  \end{pmatrix}\frac{g_j^{(k-1)}(z_j+\rho_j\zeta)}{\rho_j}\left(\frac{l\rho_j}{z_j+\rho_j\zeta}+\frac{\rho_j\phi_j'(z_j+\rho_j\zeta)}{\phi_j(z_j+\rho_j\zeta)} \right)\ldots\\
&  \qquad \frac{g_j(z_j+\rho_j\zeta)}{\rho_j^k}\left(\sum_{i=0}^{k}\frac{l!\rho_j^{k-i}}{(l-k+i)!(z_J+\rho_j\zeta)^{k-i}}\frac{\rho_j^i\phi_j^{(i)}(z_j+\rho_j\zeta)}{\phi_j(z_j+\rho_j\zeta)} \right).
\end{align*}
Also, we have $\displaystyle\lim_{j\rightarrow \ity}\left(\frac{\rho_j}{z_j+\rho_j\zeta}\right)=0$ and $\displaystyle{\lim_{j\rightarrow\ity}\rho_j^i\frac{\phi_j^{(i)}(z_j+\rho_j\zeta)}{\phi_j(z_j+\rho_j\zeta)}}=0$ \ $(i=1, 2,\ldots , k)$.\\

Note that $\displaystyle{\frac{ g_j^{(k-i)}(z_j+\rho_j\zeta)}{\rho_j^i}}$ is locally bounded on $\C$ minus the set of poles of $G(\zeta)$ since $\displaystyle{\frac{g_j(z_j+\rho_j\zeta)}{\rho_j^k}\rightarrow G(\zeta)}.$ Therefore, on every compact subset of $\C$ which does not contain any pole of $G(\zeta)$, we have $$\frac{\left(f_j^n\right)^{(k)}(z_j+\rho_j\zeta)}{\psi_j(z_j+\rho_j\zeta)}\rightarrow G^{(k)}(\zeta).$$ By Hurwitz's Theorem $(f_j^n)^{(k)}(z_j+\rho_j\zeta)-\psi_j(z_j+\rho_j\zeta)$ has zeros only of multiplicity at least $m$ and $\psi_j(z_j+\rho_j\zeta)$ has zeros only at $\displaystyle{\zeta=-\frac{z_j}{\rho_j}\rightarrow\ity}.$ Therefore, we have $G^{(k)}(\zeta)-1$ has zeros only of multiplicity at least $m$. Now, by Milloux's inequality and Nevanlinna's first fundamental theorem, we have
\begin{align*}
T(r,G)&\leq \overline{N}(r,G) + N\left(r, \frac{1}{G}\right) + N\left(r, \frac{1}{G^{(k)}-1}\right) - N\left(r, \frac{1}{G^{(k+1)}} \right) + S(r, G)\\
&\leq \frac{N\left(r, G\right)}{n} + (k+1)\overline{N}\left(r, \frac{1}{G} \right) + \overline{N}\left(r, \frac{1}{G^{(k)}-1} \right) + S(r, G)\\
&\leq \frac{N\left(r, G\right)}{n} + \frac{k+1}{n}N\left(r, \frac{1}{G} \right) + \frac{1}{m}N\left(r, \frac{1}{G^{(k)}-1} \right) + S(r, G)\\
&\leq \frac{k+2}{n}T(r, G) +\frac{1}{m}\left(T(r,G) + k\overline{N}(r, G) \right) + S(r, G)\\
&\leq \left(\frac{k+2}{n} + \frac{1}{m} + \frac{k}{nm}\right)T(r,G) +S(r, G).
\end{align*}
This contradicts that $G(\zeta)$ is a non-constant meromorphic function on $\C$.\\

\underline{Case 2.} $\displaystyle{\frac{z_j}{\rho_j}\not\rightarrow \ity}, $ taking  a subsequence and renumbering, we may assume that $\displaystyle{\frac{z_j}{\rho_j}\rightarrow\alpha},$ a finite complex number. Then $$\displaystyle{\frac{g_j(\rho_j\zeta)}{\rho_j^k}=\frac{g_j\left(z_j+ \rho_j\left(\frac{\zeta-z_j}{\rho_j}\right)\right)}{\rho_j^k}=\frac{G_j\left(\zeta-\frac{z_j}{\rho_j}\right)}{\rho_j^k}\rightarrow G(\zeta-\alpha)=\bar{G}(\zeta)},$$
where the zeros and poles of $\bar{G}$ are of multiplicity at least $n$, except the pole at $\zeta=0$, which has order $l$,  since  $\displaystyle{\frac{g_j(\rho_j\zeta)}{\rho_j^k}= \frac{f_j(\rho_j\zeta)}{\psi_j(\rho_j\zeta)\rho_j^k}=\frac{f_j(\rho_j\zeta)}{\zeta^l \phi_j(\rho_j\zeta)\rho_j^{k+l}}},$ $f_j(\zeta)$ and $\psi_j(\zeta)$ do not  have common zeros and $\rho_j\rightarrow 0$ thus, for $j$ large enough, there exist $0<r<1$ such that $f_j(\rho_j\zeta)$ do not have zeros in $\Delta(0, r)$. Thus $\displaystyle{\frac{\rho_j^k}{g_j(\rho_j\zeta)}}$ is holomorphic in $\Delta(0, r)$ and $\zeta=0$ is the  only zero of ${\displaystyle\frac{\rho_j^k}{g_j(\rho_j\zeta)}}$. On the other hand, since $\bar{G}(\zeta)$ has a pole at $\zeta=0$, we have $\displaystyle{\frac{1}{\bar{G}(\zeta)}}$ has a zero at $\zeta=0$. Since zeros are isolated therefore for $0<r'<r$, there exists $\epsilon >0$ such that $\displaystyle{\left|\frac{1}{\bar{G}(\zeta)}\right|>\epsilon}$ whenever $|\zeta|=r'$ and $\displaystyle{\left|\frac{\rho_j^k}{g_j(\rho_j\zeta)}-\frac{1}{\bar{G}(\zeta)}\right|<\epsilon}$ when $j$ is large enough. Thus by Rouche's Theorem we conclude that $\displaystyle{\frac{1}{\bar{G}(\zeta)}}$ has a zero of multiplicity $l$ at $\zeta=0.$\\

Now, set \begin{equation}\label{eq4.1} H_j(\zeta)=\frac{f_j(\rho_j\zeta)}{\rho_j^{k+1}}.\end{equation} Then $$H_j(\zeta)=\frac{\psi_j(\rho_j\zeta)}{\rho_j^l}\frac{f_j(\rho_j\zeta)}{\rho_j^k\psi_j(\rho_j\zeta)}=\frac{\psi_j(\rho_j\zeta)}{\rho_j^l}\frac{g_j(\rho_j\zeta)}{\rho_j^k}.$$
Note that $\displaystyle{\lim_{j\rightarrow\ity}\frac{\psi_j(\rho_j\zeta)}{\rho_j^l}=\zeta^l}$, thus ${H_j(\zeta)\rightarrow \zeta^l\bar{G}(\zeta)=H(\zeta)}$ locally uniformly with respect to the spherical metric. Clearly, all zeros of $H(\zeta)$ have multiplicity at least $n$ and all non-zero poles of $H(\zeta)$ are of multiplicity at least $n$ and $H(0)\neq0$. From \eqref{eq4.1}, we get $$H_j^{(k)}(\zeta)-\frac{\psi(\rho_j\zeta)}{\rho_j^l}\rightarrow H^{(k)}(\zeta)-\zeta^l$$ locally uniformly on $\C$ minus set of poles of $\bar{G}$. Now, by Hurwitz's Theorem all zeros of $H^{(k)}(\zeta)-\zeta^l$ has multiplicity  at least $m$ this shows that $H(\zeta)$ is non-constant otherwise  $H^{(k)}(\zeta)-\zeta^l=-\zeta^l$. Now, if $H(\zeta)$ is a transcendental function, by Logarithmic Derivative Theorem and Nevanlinna's first fundamental theorem, we obtain
\begin{align*}
m\left(r, \frac{1}{H}\right)&+ m\left(r, \frac{1}{H^{(k)}-\zeta^l}\right)\\
&\leq m\left(r, \frac{1}{H^{(k+1)}}\right) + m\left(r, \frac{1}{H^{(k+l)}-l!}\right) + S(r, H)\\
&=m\left(r, \frac{1}{H}+\frac{1}{H^{(k)}-\zeta^l}\right) + S(r, H)\\
&\leq m\left(r, \frac{1}{H^{(k+l+1)}}\right)+ S(r, H)\\
&\leq T\left(r, H^{(k+l+1)}\right)- N\left(r, \frac{1}{H^{(k+l+1)}}\right)\\
&\leq T\left(r, H^{(k)}\right) + (l+1)\overline{N}(r, H) - N\left(r, \frac{1}{H^{(k+l+1)}}\right) + S(r, H).
\end{align*}
Then we obtain
\begin{align*}
T(r, H)&\leq (l+1)\overline{N}(r, H)+ N\left(r, \frac{1}{H}\right)+ N\left(r, \frac{1}{H^{(k)}-\zeta^l}\right) - N\left(r, \frac{1}{H^{(k+l+1)}}\right) + S(r, H)\\
&\leq \frac{l+1}{n} N(r, H)+ (k+l+1)\overline{N}\left(r, \frac{1}{H}\right)+ (l+1)\overline{N}\left(r, \frac{1}{H^{(k)}-\zeta^l}\right) + S(r, H)\\
&\leq \left(\frac{l+1}{n}+\frac{k+l+1}{n} \right)T(r, H) + \frac{l+1}{m}\left(T(r, H)+k \overline{N}(r, H) \right)+ S(r, H)\\
&\leq \left(\frac{k+2(l+1)}{n}+\frac{l+1}{m} +\frac{k(l+1)}{mn} \right)T(r, H) + S(r, H).
\end{align*}
This is a contradiction since $\displaystyle{\frac{k+2(p+1)}{n}+\frac{p+1}{m} +\frac{k(p+1)}{mn}<1}$ and $l\leq p.$\\

If $H(\zeta)$ is a non-polynomial rational function. Then, set
\begin{equation}\label{eq4.2}
H(\zeta)=A\frac{(\zeta-\alpha_1)^{n_1}(\zeta-\alpha_2)^{n_2}\ldots (\zeta-\alpha_s)^{n_s}}{(\zeta-\beta_1)^{n_1'}(\zeta-\beta_2)^{n_2'}\ldots (\zeta-\beta_t)^{n_t'}},
\end{equation}
where $A$ is a non-zero constant and $n_1, n_1'\geq n$ are positive integers. Set $\displaystyle{\sum_{i=1}^{s}n_i = N\geq ns}$ and $\displaystyle{\sum_{j=1}^{t}n'_j = N'\geq nt}$. By argument principle and condition $(3)$ of the theorem $N'\geq N.$ From \eqref{eq4.2} we get
\begin{equation}\label{eq4.3}
H^{(k)}(\zeta)=\frac{(\zeta-\alpha_1)^{n_1-k}(\zeta-\alpha_2)^{n_2-k}\ldots (\zeta-\alpha_s)^{n_s-k}g_1(\zeta)}{(\zeta-\beta_1)^{n_1'+k}(\zeta-\beta_2)^{n_2'+k}\ldots (\zeta-\beta_t)^{n_t'+k}}=\frac{P(\zeta)}{Q(\zeta)},
\end{equation}
where $g_1(\zeta), P(\zeta) \ \text{and \ } Q(\zeta)$ are polynomials. By Lemma \ref{lemma 5}, deg $(g_1(\zeta))\leq k(s+t-1)$ and deg $\left(P(\zeta)\right)=N-ks+\text{deg\ }(g_1) $.  Again by \eqref{eq4.2} we have
\begin{align}
H^{(k+l+1)}(\zeta)&= \frac{(\zeta-\alpha_1)^{n_1-(k+l+1)}(\zeta-\alpha_2)^{n_2-(k+l+1)}\ldots (\zeta-\alpha_s)^{n_s-(k+l+1)}g_2(\zeta)}{(\zeta-\beta_1)^{n_1'+(k+l+1)}(\zeta-\beta_2)^{n_2'+(k+l+1)}\ldots (\zeta-\beta_t)^{n_t'+(k+l+1)}}\notag\\
&=\frac{P_1(\zeta)}{Q_1(\zeta)}\label{eq4.4}
\end{align}
where $g_2(\zeta), P_1(\zeta) \ \text{and\ } Q_1(\zeta)$ are polynomials such that $$\text{deg} (P_1)=N-(k+l+1)s+ \text{deg}\ (g_2).$$ By Lemma \ref{lemma 5} deg $(g_2(\zeta))\leq (k+l+1)(s+t-1).$ Again let
\begin{equation}\label{eq4.5}
H^{(k)}(\zeta) - \zeta^l = B\frac{(\zeta-\gamma_1)^{m_1}(\zeta-\gamma_2)^{m_2}\ldots (\zeta-\gamma_u)^{m_u}}{(\zeta-\beta_1)^{n_1'+k}(\zeta-\beta_2)^{n_2'+k}\ldots (\zeta-\beta_t)^{n_t'+k}}=\frac{P_2(\zeta)}{Q_2(\zeta)},
\end{equation}
where $B$ is a non-zero constant, $P_2(\zeta)$ and $Q_2(\zeta)$ are polynomials such that deg $(P_2)=M$  and $m_i\geq m$ are positive integers.  Set $\displaystyle{\sum_{i=1}^{u}m_i = M\geq mu}.$ From \eqref{eq4.5}, we get
\begin{align}
H^{(k+l+1)}(\zeta) &= \frac{(\zeta-\gamma_1)^{m_1-(l+1)}(\zeta-\gamma_2)^{m_2-(l+1)}\ldots (\zeta-\gamma_u)^{m_u-(l+1)}g_3(\zeta)}{(\zeta-\beta_1)^{n_1'+k+l+1}(\zeta-\beta_2)^{n_2'+k+l+1}\ldots (\zeta-\beta_t)^{n_t'+k+l+1}}\notag\\
&=\frac{P_3(\zeta)}{Q_3(\zeta)}=\frac{P_1(\zeta)}{Q_1(\zeta)},\label{eq4.6}
\end{align}
where $g_3(\zeta), P_3(\zeta)$ and $Q_3(\zeta)$ are polynomials such that $\text{deg\ }(P_3)= M-(l+1)u + \text{deg\ }(g_3)$ and by Lemma \ref{lemma 5}   deg $(g_3(\zeta))\leq (l+1)(u+t-1).$

Clearly, $\alpha_i\not=\gamma_j$ for $i=1,2, \ldots s$, $j=1,2, \ldots, t$, otherwise $H^{(k)}(\gamma_j)=0$, as zeros of  $H(\zeta)$ has multiplicity at least $m$ and by \eqref{eq4.5}, $\gamma_j=0$ and this shows that $H(0)=0$ which is  a contradiction to the fact that $H(0)\neq 0.$ Again from \eqref{eq4.2} and \eqref{eq4.5} we get
\begin{align}
H^{(k)}(\zeta)-\zeta^l&= \frac{(\zeta-\alpha_1)^{n_1-k}(\zeta-\alpha_2)^{n_2-k}\ldots (\zeta-\alpha_s)^{n_s-k}g_1(\zeta)}{(\zeta-\beta_1)^{n_1'+k}(\zeta-\beta_2)^{n_2'+k}\ldots (\zeta-\beta_t)^{n_t'+k}}-\zeta^l\notag\\
 &= B\frac{(\zeta-\gamma_1)^{m_1}(\zeta-\gamma_2)^{m_2}\ldots (\zeta-\gamma_u)^{m_u}}{(\zeta-\beta_1)^{n_1'+k}(\zeta-\beta_2)^{n_2'+k}\ldots (\zeta-\beta_t)^{n_t'+k}}.\label{eq4.7}
   \end{align}
From \eqref{eq4.7} we get
\begin{equation}\label{eq4.8}
M= \max\{N-ks+\text{deg\ }( g_1), \quad l+N'+kt\}
 \end{equation}
But
\begin{align}
N-ks +\text{deg \ }(g_1)&\leq N-ks +ks +kt-k\notag\\
&\leq N+kt-k\notag\\
&\leq N'+kt+l.\label{eq4.9}
\end{align}
 Thus from \eqref{eq4.8} and \eqref{eq4.9},  we get
  \begin{equation}\label{eq4.10}
   M=N'+kt+l
   \end{equation}

Since $\alpha_i\neq \gamma_j$ therefore from \eqref{eq4.4} and \eqref{eq4.6} $(\zeta-\gamma_i)^{m_i-(l+1)}$ is a factor of $g_2$. Thus we have
\begin{equation}\label{eq4.12}
M-(l+1)u\leq \text{deg\ }(g_2)\leq (k+l+1)(s+t-1).
\end{equation}

From \eqref{eq4.12} and \eqref{eq4.10} and noting that $\displaystyle{s\leq\frac{N}{n}}$, $\displaystyle{t\leq\frac{N'}{n}}$ and $\displaystyle{u\leq\frac{M}{m}}$ we obtain
\begin{align*}
\left( 1-\frac{l+1}{m}\right)(N'+kt+l)&\leq (k+l+1)\left(\frac{N}{n}+\frac{N'}{n}-1 \right)\\
&\leq (k+l+1)\left(\frac{2N'}{n}-1\right)
\end{align*}
and this shows that
\begin{equation}\notag
\left(1-\frac{l+1}{m}-2\left(\frac{k+l+1}{n}\right) \right)N'\leq -\left((k+l+1)+\left( 1-\frac{l+1}{m}\right)(l+kt)\right)<0,
\end{equation}
which is again a contradiction since $\displaystyle{\frac{p+1}{m}+2\left(\frac{k+p+1}{n}\right)<1}$ and $l\leq p$.\\

Now, we are left with the only case when $H(\zeta)$ is a non-constant polynomial. Set
\begin{equation}\label{eq4.13}
H(\zeta)= A(\zeta-\alpha_1)^{n_1}(\zeta-\alpha_2)^{n_2}\ldots (\zeta-\alpha_s)^{n_s},
\end{equation}
where $A$ is a non-zero constant and $n_i\geq n$ are positive integers. Set $\displaystyle{\sum_{i=1}^{s}n_i=N\geq ns}.$
Also set
\begin{equation}\label{eq4.14}
H^{(k)}(\zeta)-\zeta^l= B(\zeta-\beta_1)^{m_1}(\zeta-\beta_2)^{m_2}\ldots (\zeta-\beta_t)^{m_t},
\end{equation}
where $B$ is a non-zero constant and $m_j\geq m$ are positive integers. Set $\displaystyle{\sum_{j=1}^{t}m_j=M\geq mt}.$ From \eqref{eq4.13} and \eqref{eq4.14} we get $M=N-k.$\\

By the same reason as in the case of non-polynomial rational function we get $\alpha_i\neq \beta_j$ for $i=1, 2, \ldots, s,\ j=1, 2, \ldots t.$ Since zeros of $H(\zeta)$ and $H^{(k)}(\zeta)-\zeta^l$ are of multiplicity at least $n $ and $m$ respectively we observe that $\alpha_i$ and $\gamma_j$ are  zeros of $H^{(k+l+1)}$ with multiplicities at least $n_i-(k+l+1)$ and $m_j-(l+1)$ respectively. Therefore we get
\begin{equation}\notag
N-(k+l+1)s + M - (l+1)t\leq \text{deg\ }(H^{(k+l+1)})= N-(k+l+1).
\end{equation}
Since $M=N-k$ we get
\begin{equation}\notag
N\leq (k+l+1)s + (l+1)(t-1).
\end{equation}
Now, note that $\displaystyle{s\leq \frac{N}{n}}$ and $\displaystyle{t\leq\frac{M}{m}=\frac{N-k}{m}}.$ Therefore, we get
\begin{equation}\notag
\left( 1- \frac{k+l+1}{n}-\frac{l+1}{m} \right)N\leq -\left((l+1)\left(1+\frac{k}{m} \right) \right)<0.
\end{equation}
This is again a contradiction. Thus we have proved that $\mathcal G$ is normal in $\D.$\\

It remains to prove that $\fr$ is normal at $z=0$. Since $\mathcal{G}$  is normal at $0$ and $g_j(0)=\ity$ for all $j$, there exists $\delta > 0$ such that $|g_j(z)|\geq 1$ on $\Delta(0, \delta)$ for all $j$. Thus $f_j(z)\neq0$ on $\Delta(0, \delta)$, and hence  $1/f_j$ is holomorphic on $\Delta(0, \delta)$ for all $j.$ Choose $\delta $ small enough  that $\psi_j(z)\geq |z|^l/2$ for $|z|\leq \delta$ and $j$ sufficiently large, we have
\begin{equation}\notag
\left|\frac{1}{f_j^n(z)}\right|=\left|\frac{1}{g_j(z)}\frac{1}{\psi(z)} \right|\leq \frac{2^{l+1}}{\delta^m} \ \text{\ for\ }|z|=\frac{\delta}{2}.
\end{equation}
By the maximum modulus principle, this holds throughout $\Delta(0, \delta/2).$ This shows that $\fr$ is normal. \end{proof}

\textbf{Acknowledgements.} \quad  We wish to thank  Indrajit Lahiri (Kalyani University) and  Kaushal Verma (IISc Bangalore) for their  valuable suggestions and helps.

\end{document}